\newenvironment{theorem*}[1]{\textbf{#1}\itshape \hspace{.3em}}{\upshape}
\newenvironment{remark*}[1]{\textbf{#1}\itshape \hspace{.3em}}{\upshape}
\newenvironment{corollary*}[1]{\textbf{#1}\itshape \hspace{.3em}}{\upshape}
\newenvironment{proof}{\textbf{Proof\hspace{.3em}}}{}
\newtheorem{definition}{Definition}[section]
\newtheorem{theorem}[definition]{Theorem}
\newtheorem{lemma}[definition]{Lemma}
\newtheorem{example}[definition]{Example}
\newtheorem{corollary}[definition]{Corollary}
\newtheorem{remark}[definition]{Remark}
\newcommand{\ord}{\ensuremath{\mathrm{ord}}}
\begin{document}

\begin{frontmatter}

% Title, authors and addresses

% use the thanksref command within \title, \author or \address for footnotes;
% use the corauthref command within \author for corresponding author footnotes;
% use the ead command for the email address,
% and the form \ead[url] for the home page:
% \title{Title\thanksref{label1}}
% \thanks[label1]{}
% \author{Name\corauthref{cor1}\thanksref{label2}}
% \ead{email address}
% \ead[url]{home page}
% \thanks[label2]{}
% \corauth[cor1]{}
% \address{Address\thanksref{label3}}
% \thanks[label3]{}

%\title{}

% use optional labels to link authors explicitly to addresses:
% \author[label1,label2]{}
% \address[label1]{}
% \address[label2]{}

%\author{}

\title{Generating functions attached to\\ some infinite matrices}
\author{Paul Monsky}

\address{Brandeis University, Waltham MA  02454-9110, USA. monsky@brandeis.edu}

\begin{abstract}
Let $V$ be an infinite matrix with rows and columns indexed by the positive integers, and entries in a field $F$. Suppose that $v_{i,j}$ only depends on $i-j$ and is 0 for $|i-j|$ large. Then $V^{n}$ is defined for all $n$, and one has a ``generating function'' $G=\sum a_{1,1}(V^{n})z^{n}$. Ira Gessel has shown that $G$ is algebraic over $F(z)$. We extend his result, allowing $v_{i,j}$ for fixed $i-j$ to be eventually periodic in $i$ rather than constant. This result and some variants of it that we prove will have applications to Hilbert-Kunz theory.

\end{abstract}

%\begin{keyword}
% keywords here, in the form: keyword \sep keyword

% PACS codes here, in the form: \PACS code \sep code
%\PACS 
%\end{keyword}
\end{frontmatter}

% main text

\section{Introduction}
\label{sectionintro}

Throughout, $\Lambda$ is a ring with identity element 1. Suppose that $w_{i,j}$, $i$ and $j$ ranging over the positive integers, are in $\Lambda$ and that $w_{i,j}=0$ whenever $i-j$ lies outside a fixed finite set. Then if $W$ is the infinite matrix $|w_{i,j}|$, one may speak of $W^{n}$ for all $n\ge 0$, and one gets a generating function $G(W)=\sum_{0}^{\infty}a_{n}z^{n}$ in $\Lambda[[z]]$, where $a_{n}$ is the (1,1) entry in the matrix $W^{n}$. We shall prove:

\begin{theorem*}{Theorem I}
\label{theorem1.1}
Suppose that $w_{i,j}=0$ if $i-j\not\in\{-1,0,1\}$, and that $w_{i+1,j+1}=w_{i,j}$ unless $i=j=1$. Suppose further that $\Lambda =M_{s}(F)$, $F$ a field, so that $G(W)$ may be viewed as an $s$ by $s$ matrix with entries in $F[[z]]$. Then these matrix entries are algebraic over $F(z)$.
\end{theorem*}

\begin{corollary*}{Corollary}
\label{corollary1.2}
Let $F$ be a field and $v_{i,j}$, $i$ and $j$ ranging over the positive integers, be in $F$. Suppose:

\begin{enumerate}
\item[(a)] $v_{i,j}=0$ whenever $i-j$ lies outside a fixed finite set.
\item[(b)] For fixed $r$ in $Z$, $v_{i,i+r}$ is an eventually periodic function of $i$.
\end{enumerate}

Then if $V$ is the matrix $|v_{i,j}|$, the generating function $G(V)$ is algebraic over $F(z)$.

\end{corollary*}

\begin{proof}
To derive the corollary we choose $s$ so that:

\begin{enumerate}
\item[(1)] $v_{i,j}=0$ whenever $i\le s$ and $j>2s$ or $j\le s$ and $i>2s$.
\item[(2)] $v_{i+s,j+s}=v_{i,j}$ whenever $i+j\ge s+2$.
\end{enumerate}

We then write the initial $2s$ by $2s$ block in $V$ as 
$\left|\begin{smallmatrix}
D&C\\
A&B
\end{smallmatrix}
\right|$ with $A$, $B$, $C$, $D$ in $M_{s}(F)$. Our choice of $s$ tells us that $V$ is built out of $s$ by $s$ blocks, where the blocks along the diagonal are a single $D$, followed by $B$'s, those just below a diagonal block are $A$'s, those just above a diagonal block are $C$'s, and all other entries are 0. Now let $\Lambda =M_{s}(F)$ and $W=|w_{i,j}|$ where $w_{i+1,i}=A$, $w_{i,i+1}=C$, $w_{1,1}=D$, $w_{i,i}=B$ for $i>1$, and all other $w_{i,j}$ are 0. View $G(W)$ as an $s$ by $s$ matrix with entries in $F[[z]]$. One sees easily that $G(V)$ is the (1,1) entry in this matrix, and Theorem I applied to $W$ gives the corollary.\qed
\end{proof}

\begin{remark*}{Remark}
\label{remark1.3}
When $v_{i,j}$ only depends on $i-j$, the above corollary is due to Gessel. (When the matrix entries of $V$ are all\, $0$'s and $1$'s the result is contained in Corollary 5.4 of \cite{1}.  The restriction on the matrix entries isn't essential in Gessel's proof, as one can use a generating function for walks with weights.)  
\end{remark*}

Our proof of Theorem I is easier than Gessel's proof of his special case of the corollary. The reason for this is that by working over $\Lambda$ rather than over $F$ we are able to restrict our study to walks with step-sizes in $\{-1,0,1\}$. (A complication, fortunately minor, is that the weights must be taken in the non-commutative ring $\Lambda$.) Our proof is well-adapted to finding an explicit polynomial relation between $G(V)$ and $z$; we'll work out a few examples. This paper would not have been possible without Ira Gessel's input. I thank him for showing me tools of the combinatorial trade.

\section{Walks and generating functions}
\label{section2}

\begin{definition}
\label{def2.1}
If $l\ge 0$, an ordered $l+1$-tuple $\alpha = (\alpha_{0},\ldots, \alpha_{l})$ of integers is a (Motzkin) walk of length $l=l(\alpha)$ if each of $\alpha_{1}-\alpha_{0},\ldots,\alpha_{l}-\alpha_{l-1}$ is in $\{-1,0,1\}$.
\end{definition}

We say that the start of the walk is $\alpha_{0}$, the finish is $\alpha_{l}$, and that $\alpha$ is a walk from $\alpha_{0}$ to $\alpha_{l}$.

\begin{definition}
\label{def2.2}
If $\alpha$ and $\beta$ are walks of lengths $l$ and $m$, the concatenation $\alpha\beta$ of $\alpha$ and $\beta$ is the walk $(\alpha_{0},\ldots,\alpha_{l},\alpha_{l}+(\beta_{1}-\beta_{0}),\ldots,\alpha_{l}+(\beta_{m}-\beta_{0}))$ of length $l+m$.
\end{definition}

Now let $\Lambda$ be a ring with identity element 1, and $A$, $B$, $C$, $D$ lie in $\Lambda$. To each walk $\alpha$ we attach weights $w(\alpha)$ and $w^{*}(\alpha)$ in $\Lambda$:

\begin{definition}
\label{def2.3}
If $l(\alpha)=0$, $w(\alpha)=w^{*}(\alpha)=1$. If $l(\alpha)>0$, $w(\alpha)=U_{1}\cdot\ldots\cdot U_{l}$ where $U_{i}=A$, $B$ or $C$ according as $\alpha_{i}-\alpha_{i-1}$ is $-1$, $0$, or $1$. The definition of $w^{*}(\alpha)$ is the same with one change: if $\alpha_{i}=\alpha_{i-1}=0$ then $U_{i}=D$ rather than $B$.
\end{definition}

Evidently $w(\alpha\beta)=w(\alpha)w(\beta)$. Furthermore $w^{*}(\alpha\beta)=w^{*}(\alpha)w^{*}(\beta)$ whenever $\alpha$ and $\beta$ are walks from $0$ to $0$.  

\begin{definition}
\label{def2.4}
$\alpha$ is ``standard'' if each $\alpha_{i}\ge \alpha_{l}$. Note that a walk from $0$ to $0$ is standard if and only if each $\alpha_{i}\ge 0$.
\end{definition}

\begin{definition}
\label{def2.5}
$\alpha$ is ``primitive'' if $l(\alpha)>0$, $\alpha_{0}=\alpha_{l}$ and no $\alpha_{i}$ with $0<i<l$ is $\alpha_{0}$. Note that a standard walk from $0$ to $0$ is primitive if and only if $l(\alpha)>0$ and each $\alpha_{i}$, $0<i<l$, is $>0$.
\end{definition}

\begin{definition}
\label{def2.6}
\hspace{2em} % so enumerated list will start on new line
\vspace{-2ex}
\begin{enumerate}
\item[(1)]$G(w)=\sum w(\alpha)z^{l(\alpha)}$, the sum extending over all standard walks from $0$ to $0$. $H(w)$ is the sum extending over all primitive standard walks from $0$ to $0$.
\item[(2)]$G(w^{*})$ and $H(w^{*})$ are defined similarly, using $w^{*}(\alpha)$ in place of $w(\alpha)$.
\end{enumerate}
\end{definition}

\begin{lemma}
\label{lemma2.7}
Let $G=G(w)$, $H=H(w)$. Then, in $\Lambda[[z]]$:

\begin{enumerate}
\item[(1)]$G=1+H+H^{2}+\cdots$
\item[(2)]$H=Bz+CGAz^{2}$
\end{enumerate}
\end{lemma}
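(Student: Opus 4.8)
The plan is to prove both identities by the standard device of decomposing a standard excursion (a standard walk from $0$ to $0$) into indecomposable pieces and tracking the two bookkeeping data that behave well under concatenation: the weight, which is multiplicative via $w(\alpha\beta)=w(\alpha)w(\beta)$, and the length, which is additive. Since there are only finitely many walks of each fixed length, the coefficient of $z^{n}$ in $G$ and in $H$ is a finite sum in $\Lambda$, so both series are genuine elements of $\Lambda[[z]]$; moreover every primitive walk has positive length, so $H$ has zero constant term and the geometric series $1+H+H^{2}+\cdots$ converges $z$-adically.

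For part (1) I would use the first-return decomposition. Given a standard excursion $\alpha$ of length $l$, let $0=t_{0}<t_{1}<\cdots<t_{k}=l$ be the indices at which $\alpha_{t_{j}}=0$. The segments $\alpha^{(1)},\ldots,\alpha^{(k)}$ cut out between consecutive zeros are then, after the obvious identification, primitive standard excursions, and $\alpha$ is their concatenation $\alpha^{(1)}\cdots\alpha^{(k)}$; conversely any concatenation of primitive standard excursions is again a standard excursion, and this correspondence is a bijection (the empty walk corresponding to $k=0$). Because weight is multiplicative and length additive, summing $w(\alpha)z^{l(\alpha)}$ over all excursions with exactly $k$ pieces reproduces $H^{k}$ — here the non-commutative order of the factors matches the left-to-right order of the pieces — and summing over $k\ge 0$ gives $G=1+H+H^{2}+\cdots$.

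For part (2) I would analyze the first step of a primitive standard excursion $\alpha$. Standardness ($\alpha_{i}\ge 0$ with $\alpha_{0}=0$) forbids a first step of $-1$, so the first step is $0$ or $+1$. If it is $0$ then $\alpha_{1}=0=\alpha_{0}$, and primitivity (no interior return to $0$) forces $l=1$; this is the walk $(0,0)$, of weight $B$ and length $1$, contributing $Bz$. If the first step is $+1$ then $l\ge 2$ and, since every interior value is $\ge 1$ and the walk ends at $0$, the penultimate value $\alpha_{l-1}$ must equal $1$ and the last step must be $-1$; deleting these two end steps and shifting down by $1$ gives a bijection between such $\alpha$ and arbitrary standard excursions $\beta$, with $w(\alpha)=C\,w(\beta)\,A$ and $l(\alpha)=l(\beta)+2$. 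Summing over all such $\alpha$ yields $CGAz^{2}$, whence $H=Bz+CGAz^{2}$.

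The routine parts are the two bijections; the point that needs genuine care is the non-commutative bookkeeping. Because $\Lambda$ need not be commutative, I must keep the weight factors in their left-to-right order throughout — both when recombining the primitive pieces into $H^{k}$ in part (1) and when writing $w(\alpha)=C\,w(\beta)\,A$ (rather than, say, $A\,w(\beta)\,C$) in part (2) — while using only that the scalar $z$ is central in $\Lambda[[z]]$, so that powers of $z$ may be collected freely. This ordering discipline is the main thing I expect to have to watch, since the combinatorics of the decompositions themselves is elementary.
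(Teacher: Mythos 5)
Your proposal is correct and follows essentially the same route as the paper: part (1) via the unique factorization of a standard excursion into primitive standard excursions, and part (2) via the decomposition of a primitive excursion of length $>1$ as $(0,1)$, a shifted standard excursion $\beta$, and $(0,-1)$, giving $w(\alpha)=C\,w(\beta)\,A$ and the term $CGAz^{2}$. The extra details you supply (the $z$-adic convergence of $1+H+H^{2}+\cdots$ and the care with non-commutative ordering) are sound and consistent with the paper's argument.
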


\begin{proof}
Every standard walk from $0$ to $0$ of length $>0$ is either primitive or uniquely a concatenation of two or more primitive standard walks from $0$ to $0$. The multiplicative property of $w$ now gives (1). To prove (2) note that the primitive standard walk $(0,0)$ has $w=B$. And a primitive standard walk from $0$ to $0$ of length $l>1$ is a concatenation of $(0,1)$, a standard walk, $\beta$, from $0$ to $0$ of length $l-2$ and $(0,-1)$. Then $w(\alpha)=C w(\beta)A$. Since $\alpha \rightarrow\beta$ gives a 1--1 correspondence between primitive standard walks of length $l$ from $0$ to $0$ and standard walks of length $l-2$ from $0$ to $0$, we get the result.\qed
\end{proof}

\begin{corollary}
\label{corollary2.8}
If $G=G(w)$, then $G-1-(BG)z-(CGAG)z^{2}=0$ in $\Lambda[[z]]$.

\end{corollary}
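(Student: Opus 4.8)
The plan is to derive this directly from Lemma \ref{lemma2.7}, since part (1) is a geometric-series expression for $G$ in terms of $H$ and part (2) gives $H$ explicitly. The only thing I must keep in mind throughout is that $\Lambda$ is noncommutative, so I may not reorder the factors $A$, $B$, $C$, $G$; however, the indeterminate $z$ is central in $\Lambda[[z]]$, so powers of $z$ can be slid freely past matrix entries. This centrality of $z$ is the single point requiring attention, and it is what makes the computation go through cleanly rather than being an obstacle.

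First I would rewrite Lemma \ref{lemma2.7}(1) in closed form. From $G = 1 + H + H^{2} + \cdots$ one factors out a single $H$ on the left of the tail to obtain $H + H^{2} + \cdots = H(1 + H + H^{2} + \cdots) = HG$, whence
\begin{equation*}
G = 1 + HG.
\end{equation*}
(I take care to factor $H$ on the \emph{left}, matching the side on which the explicit expression for $H$ will be substituted.)

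Next I would substitute the value of $H$ from Lemma \ref{lemma2.7}(2), namely $H = Bz + CGAz^{2}$, into $HG$:
\begin{equation*}
HG = (Bz + CGAz^{2})G = BzG + CGAz^{2}G.
\end{equation*}
Now I use that $z$ commutes with every element of $\Lambda$, so $BzG = (BG)z$ and $CGAz^{2}G = (CGAG)z^{2}$, giving $HG = (BG)z + (CGAG)z^{2}$.

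Finally, combining the two displays yields $G = 1 + (BG)z + (CGAG)z^{2}$, and transposing the right-hand terms gives the asserted identity $G - 1 - (BG)z - (CGAG)z^{2} = 0$ in $\Lambda[[z]]$. The whole argument is a short formal manipulation; there is no real difficulty beyond respecting the order of the noncommutative factors and exploiting the centrality of $z$.
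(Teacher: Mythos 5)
Your proof is correct and follows essentially the same route as the paper: the paper also deduces $(1-H)\cdot G=1$ (equivalently $G=1+HG$) from Lemma \ref{lemma2.7}(1) and then substitutes $H=Bz+CGAz^{2}$. Your explicit attention to factoring $H$ on the left and to the centrality of $z$ merely spells out details the paper leaves implicit.
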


\begin{proof}
By (1) of Lemma \ref{lemma2.7}, $(1-H)\cdot G=1$. Substituting $H=Bz+CGAz^{2}$ gives the result.\qed
\end{proof}

\begin{theorem}
\label{theorem2.9}
Suppose that $\Lambda = M_{s}(F)$, $F$ a field, so that $G(w)$ may be viewed as an $s$ by $s$ matrix with entries in $F[[z]]$. Then these matrix entries, $u_{i,j}$, are algebraic over $F(z)$.
\end{theorem}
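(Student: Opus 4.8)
The plan is to convert the single matrix identity of Corollary~\ref{corollary2.8} into a square system of scalar polynomial equations and then apply the Jacobian criterion. First I would treat the $N:=s^2$ entries $u_{i,j}$ of $G=G(w)$ as unknowns and introduce a matrix $Y=(Y_{k,l})$ of indeterminates, setting $\Phi(Y)=Y-1-zBY-z^2\,CYAY$, where $A,B,C$ and the identity $1$ are fixed matrices over $F$. By Corollary~\ref{corollary2.8} we have $\Phi(G)=0$, which is a system of $N$ polynomial equations $f_{k,l}(z,Y)=0$ in the $N$ unknowns $Y_{k,l}$, all with coefficients in $F[z]$. Expanding $G=1+zBG+z^2CGAG$ by powers of $z$ shows that each coefficient of $G$ is determined by the earlier ones, so $G$ is the unique solution in $M_s(F[[z]])$ with constant term $1$; in particular no information is lost in replacing the matrix equation by this scalar system.

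Next I would compute the Jacobian of the system at the solution. The derivative of $\Phi$ at $Y$ in a direction $X$ is $D\Phi_Y(X)=X-zBX-z^2(CXAY+CYAX)$, and evaluating at $Y=G$ and then at $z=0$ gives $D\Phi_G(X)\big|_{z=0}=X$, the identity operator on $M_s(F)$. Hence the $N\times N$ Jacobian matrix $\bigl(\partial f_{k,l}/\partial Y_{p,q}\bigr)$, evaluated at $Y=G$, reduces modulo $z$ to the identity matrix. Its determinant is therefore an element of $F[[z]]$ with constant term $1$, hence a unit, and in particular nonzero.

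The central step is to pass from the nonvanishing of this Jacobian to algebraicity. Here I would invoke the Jacobian criterion. Let $W\subseteq\mathbb{A}^N$ be the affine variety over $F(z)$ defined by the $f_{k,l}$, and regard $(u_{i,j})$ as an $L$-rational point of $W$, where $L=F((z))\supseteq F[[z]]$. Since the Jacobian has full rank $N$ at this point, the Zariski tangent space to $W_L:=W\times_{F(z)}L$ there is $0$-dimensional, so $(u_{i,j})$ is an isolated reduced point of $W_L$. The Zariski closure $Z$ of $(u_{i,j})$ in $\mathbb{A}^N_{F(z)}$ is an irreducible $F(z)$-variety whose dimension equals $\operatorname{trdeg}_{F(z)}F(z)\bigl(u_{i,j}\bigr)$; base change to $L$ preserves this dimension and keeps $Z_L$ equidimensional, yet $Z_L$ contains the isolated point $(u_{i,j})$ and so has a $0$-dimensional component through it. Therefore $\dim Z=0$, i.e. every $u_{i,j}$ is algebraic over $F(z)$.

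I expect this last paragraph — reconciling an $L$-rational isolated point with dimensions measured over $F(z)$ — to be the main obstacle. It can instead be handled by pure elimination: one removes the unknowns $Y_{p,q}$ one at a time by taking resultants, the nonvanishing of the Jacobian guaranteeing that the successive eliminant polynomials do not degenerate to $0$, and arrives at a nonzero $P(z,Y_{i,j})$ annihilating each $u_{i,j}$.
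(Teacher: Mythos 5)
Your proposal is correct and follows essentially the same route as the paper's own proof: the same quadratic system obtained from Corollary~\ref{corollary2.8} applied to a matrix of indeterminates, the same observation that the Jacobian is congruent to the identity modulo $z$ and hence invertible, and the same conclusion that $(u_{i,j})$ is therefore an isolated point of the intersection, forcing its coordinates to be algebraic over $F(z)$. Your final paragraph merely fills in, via base change to $F((z))$ and equidimensionality of the Zariski closure, the step the paper leaves implicit in ``and so its co-ordinates are algebraic over $F(z)$,'' which is a legitimate and correct elaboration rather than a different method.
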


\begin{proof}
Let $U=|U_{i,j}|$ be an $s$ by $s$ matrix of indeterminates over $F$, and $p_{i,j}$ be the $(i,j)$ entry in $U-I_{s}-(BU)z-(CUAU)z^{2}$. The $p_{i,j}$ are degree 2 polynomials in $U_{1,1},\ldots ,U_{s,s}$ with co-efficients in $F[z]$. By Corollary \ref{corollary2.8}, $p_{i,j}(u_{1,1},\ldots ,u_{s,s})=0$. Now $p_{i,j}=U_{i,j}-\delta_{i,j}-zf_{i,j}(U_{1,1},\ldots ,U_{s,s},z)$ where the $f_{i,j}$ are polynomials with co-efficients in $F$. It follows that the Jacobian matrix of the $p_{i,j}$ with respect to the $U_{i,j}$, evaluated at $(u_{1,1},\ldots ,u_{s,s})$, is congruent to $I_{s^{2}} \mod z$ in the $s^{2}$ by $s^{2}$ matrix ring over $F[[z]]$, and so is invertible. Thus $(u_{1,1},\ldots , u_{s,s})$ is an isolated component of the intersection of the hypersurfaces $p_{i,j}(U_{1,1},\ldots , U_{s,s})=0$, and so its co-ordinates, $u_{1,1},\ldots , u_{s,s}$, are algebraic over $F(z)$. \qed 
\end{proof}

\begin{lemma}
\label{lemma2.10}
$G(w^{*})^{-1}-G(w)^{-1}=(B-D)z$.
\end{lemma}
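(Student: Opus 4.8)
The plan is to derive for $w^{*}$ the exact analogues of the two identities in Lemma \ref{lemma2.7}, and then simply subtract the resulting expressions for the inverses $G(w^{*})^{-1}$ and $G(w)^{-1}$. Writing $G^{*}=G(w^{*})$ and $H^{*}=H(w^{*})$, I expect to show that $G^{*}=(1-H^{*})^{-1}$ and that $H^{*}=Dz+CG(w)Az^{2}$, where --- and this is the crucial point --- the generating function appearing in the quadratic term is $G(w)$, not $G(w^{*})$.

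For the first identity I would argue exactly as in part (1) of Lemma \ref{lemma2.7}: a standard walk from $0$ to $0$ of positive length is either primitive or uniquely a concatenation of primitive standard walks from $0$ to $0$. Since the remark following Definition \ref{def2.3} guarantees $w^{*}(\alpha\beta)=w^{*}(\alpha)w^{*}(\beta)$ for walks from $0$ to $0$, summing over these decompositions gives $G^{*}=1+H^{*}+(H^{*})^{2}+\cdots$, whence $(1-H^{*})G^{*}=1$ and $G^{*-1}=1-H^{*}$.

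For the second identity I would classify primitive standard walks from $0$ to $0$ by length. The unique walk of length $1$ is $(0,0)$, and since $\alpha_{1}=\alpha_{0}=0$ its $w^{*}$-weight is $D$ rather than $B$; this contributes $Dz$. A primitive standard walk of length $l>1$ is a concatenation of $(0,1)$, the lift to levels $\ge 1$ of a standard walk $\beta$ from $0$ to $0$ of length $l-2$, and $(0,-1)$; the outer steps contribute $C$ and $A$. The essential observation is that this interior portion never touches level $0$, so it has no flat step at height $0$, and therefore its $w^{*}$-weight equals the ordinary weight $w(\beta)$. Summing over $\beta$ thus produces $CG(w)Az^{2}$, not $CG(w^{*})Az^{2}$, so $H^{*}=Dz+CG(w)Az^{2}$.

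The conclusion is then immediate. The present computation gives $G(w^{*})^{-1}=1-H^{*}=1-Dz-CG(w)Az^{2}$, while Lemma \ref{lemma2.7} and Corollary \ref{corollary2.8} give $G(w)^{-1}=1-H=1-Bz-CG(w)Az^{2}$. The quadratic terms are identical and cancel upon subtraction, leaving $G(w^{*})^{-1}-G(w)^{-1}=(B-D)z$. The only step demanding care --- the main obstacle, though a mild one --- is verifying that the interior walk of a primitive walk is genuinely lifted off level $0$, so that it is weighted by $w$ and not by $w^{*}$; this is exactly what forces the two $z^{2}$-terms to coincide and cancel, while the length-one walks produce the single surviving difference $(B-D)z$.
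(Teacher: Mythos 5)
Your proof is correct and takes essentially the same approach as the paper: both rest on the identity $G^{-1}=1-H$ for each weighting (from the decomposition into primitive walks) together with the observation that a primitive standard walk of length $>1$ never has a flat step at level $0$, so its $w$- and $w^{*}$-weights coincide, leaving only the length-one walk $(0,0)$ to contribute $(B-D)z$. The sole cosmetic difference is that you compute $H(w^{*})=Dz+CG(w)Az^{2}$ explicitly before subtracting, whereas the paper compares $H(w)$ and $H(w^{*})$ termwise without evaluating either.
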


\begin{proof}
The proof of Lemma \ref{lemma2.7} (1) shows that $G(w^{*})^{-1}=1-H(w^{*})$ with $H(w^{*})$ as in Definition \ref{def2.6}. So it suffices to show that $H(w)-H(w^{*})=(B-D)z$. Now for a primitive walk $\alpha$ of length $>1$ from $0$ to $0$ one cannot have $\alpha_{i-1}=\alpha_{i}=0$, and so $w(\alpha)=w^{*}(\alpha)$. On the other hand, for the primitive walk $(0,0)$, $w=B$ and $w^{*}=D$. This gives the lemma.\qed
\end{proof}

Combining Lemma \ref{lemma2.10} with Theorem \ref{theorem2.9} we get:

\begin{theorem}
\label{theorem2.11}
If $\Lambda=M_{s}(F)$ the matrix entries of the $s$ by $s$ matrix $G(w^{*})$ are algebraic over $F(z)$.
\end{theorem}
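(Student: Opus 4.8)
The plan is to deduce this directly from Theorem \ref{theorem2.9} and Lemma \ref{lemma2.10}, as the sentence preceding the statement suggests. The key structural fact I would use is that the set $L$ of power series in $F[[z]]$ that are algebraic over $F(z)$ is a field: it is the relative algebraic closure of $F(z)$ inside $F((z))$, namely the intersection of $F((z))$ with an algebraic closure of $F(z)$, hence closed under addition, multiplication, and inversion. Theorem \ref{theorem2.9} says precisely that every entry of $G(w)$ lies in $L$.

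First I would check that $G(w)$ is invertible inside $M_{s}(F[[z]])$. Every standard walk from $0$ to $0$ of positive length contributes a term divisible by $z$, while the trivial walk contributes $1$; hence $G(w)\equiv I_{s}\pmod z$. Therefore $\det G(w)\equiv 1\pmod z$ is a unit in $F[[z]]$, so $G(w)$ is invertible. Writing $G(w)^{-1}=(\det G(w))^{-1}\,\mathrm{adj}\,G(w)$ via Cramer's rule, both $\det G(w)$ and the entries of the adjugate are polynomials in the entries of $G(w)$, hence lie in $L$; since $L$ is a field, $(\det G(w))^{-1}\in L$ as well, so every entry of $G(w)^{-1}$ lies in $L$.

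Next I would invoke Lemma \ref{lemma2.10}, which gives $G(w^{*})^{-1}=G(w)^{-1}+(B-D)z$. The right-hand side is the sum of a matrix with entries in $L$ and a matrix with entries in $F[z]\subseteq L$, so every entry of $G(w^{*})^{-1}$ lies in $L$. Finally, the same reduction shows $G(w^{*})\equiv I_{s}\pmod z$ (only the trivial walk survives mod $z$), whence $G(w^{*})^{-1}\equiv I_{s}\pmod z$ is itself invertible in $M_{s}(F[[z]])$; applying Cramer's rule once more to $G(w^{*})^{-1}$ expresses each entry of $G(w^{*})$ as a polynomial in the entries of $G(w^{*})^{-1}$ divided by its determinant, all of which lie in $L$. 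Hence the entries of $G(w^{*})$ are algebraic over $F(z)$.

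There is no genuine obstacle here; the only points requiring care are the two invertibility checks, which license Cramer's rule and guarantee the relevant determinants are nonzero units, and these follow immediately from the congruence $\equiv I_{s}\pmod z$. The closure of $L$ under the field operations is standard.
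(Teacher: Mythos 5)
Your proposal is correct and follows exactly the paper's route: the paper's entire proof of Theorem \ref{theorem2.11} is the single sentence ``Combining Lemma \ref{lemma2.10} with Theorem \ref{theorem2.9} we get,'' and you have simply supplied the routine details (the congruences $G(w)\equiv I_{s}$ and $G(w^{*})^{-1}\equiv I_{s} \bmod z$, Cramer's rule, and closure of algebraic elements under the ring operations and inversion). One small imprecision worth fixing: the set of elements of $F[[z]]$ algebraic over $F(z)$ is a ring but not a field (it does not contain $1/z$), so you should take $L$ to be the relative algebraic closure of $F(z)$ inside $F((z))$; since every inverse you form is the inverse of a unit of $F[[z]]$, your argument goes through unchanged.
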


Now let $W=|w_{i,j}|$ where $w_{i+1,i}=A$, $w_{i,i+1}=C$, $w_{1,1}=D$, $w_{i,i}=B$ for $i>1$, and all the other $w_{i,j}=0$. In view of Theorem \ref{theorem2.11} the proof of Theorem I will be complete once we show that $G(W)=G(w^{*})$ where $w^{*}$ is the weight function of Definition \ref{def2.3}. The key to this is:

\begin{lemma}
\label{lemma2.12}
For $k\ge 1$ let $u_{k}^{(n)}$ be $\sum w^{*}(\alpha)$, the sum extending over all standard walks of length $n$ from $k-1$ to $0$. Then:

\begin{enumerate}
\item[(1)] $u_{k}^{(0)}=1$ or $0$ according as $k=1$ or $k>1$.
\item[(2)] $u_{1}^{(n+1)}=Du_{1}^{(n)} + Cu_{2}^{(n)}$.
\item[(3)] If $k>1$, $u_{k}^{(n+1)}=Au_{k-1}^{(n)}+Bu_{k}^{(n)}+Cu_{k+1}^{(n)}$.
\end{enumerate}
\end{lemma}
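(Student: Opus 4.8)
The plan is to prove all three identities by a single first-step decomposition of the standard walks being summed, taking care that the noncommutativity of $\Lambda$ forces the step-weight to appear on the \emph{left}.

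Part (1) I would dispose of by inspection. A walk of length $0$ is a single integer $(\alpha_{0})$, and it is a walk from $k-1$ to $0$ exactly when $\alpha_{0}=k-1=0$, i.e. $k=1$; in that case its weight is $1$ by Definition \ref{def2.3}. For $k>1$ there is no such walk, so the sum is empty and equals $0$.

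For (2) and (3) I would fix a standard walk $\alpha=(\alpha_{0},\ldots,\alpha_{n+1})$ of length $n+1$ from $k-1$ to $0$ and split off its first step. Setting $\beta=(\alpha_{1},\ldots,\alpha_{n+1})$, this $\beta$ is a standard walk of length $n$ from $\alpha_{1}$ to $0$; since the heights $\alpha_{1},\ldots,\alpha_{n+1}$ are literally unchanged, the $D$-versus-$B$ bookkeeping in $w^{*}$ is preserved, and one gets $w^{*}(\alpha)=U_{1}\cdot w^{*}(\beta)$, where $U_{1}$ is the weight of the initial step $\alpha_{0}\to\alpha_{1}$. The map $\alpha\mapsto(\text{first step},\beta)$ is a bijection between standard walks of length $n+1$ from $k-1$ to $0$ and pairs consisting of a legal first step and a standard walk of length $n$ from $\alpha_{1}$ to $0$, so I can regroup the sum by the value of $\alpha_{1}$ and identify each inner sum with the appropriate $u$ via its definition.

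It remains to read off which first steps are legal and what weights they carry. In case (3) we have $\alpha_{0}=k-1\ge 1$, so all three moves are available: $\alpha_{1}=k-2$ (down, weight $A$, remainder summing to $u_{k-1}^{(n)}$), $\alpha_{1}=k-1$ (level at height $\ge 1$, weight $B$, remainder $u_{k}^{(n)}$), and $\alpha_{1}=k$ (up, weight $C$, remainder $u_{k+1}^{(n)}$), yielding $Au_{k-1}^{(n)}+Bu_{k}^{(n)}+Cu_{k+1}^{(n)}$. In case (2) we have $\alpha_{0}=0$, so standardness forbids the down move; the level move stays at height $0$ and therefore carries weight $D$ rather than $B$, contributing $Du_{1}^{(n)}$, while the up move contributes $Cu_{2}^{(n)}$. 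The only genuine care-point—the part most prone to error rather than a real obstacle—is this bookkeeping: keeping $U_{1}$ on the left because $\Lambda$ is noncommutative, and correctly recording $D$ (not $B$) for the level step taken at height $0$, which is precisely what distinguishes recursion (2) from recursion (3).
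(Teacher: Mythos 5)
Your proof is correct and is essentially the paper's own argument: both rest on the same first-step bijection between standard walks of length $n+1$ from $k-1$ to $0$ and pairs (legal first step, standard walk of length $n$ to $0$), with the step weight multiplied on the left and the $D$-versus-$B$ distinction at height $0$ accounting for the difference between recursions (2) and (3). The only cosmetic difference is direction: the paper prepends a step to a length-$n$ walk, while you strip the first step from a length-$(n+1)$ walk, which is the inverse description of the identical bijection.
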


Lemma \ref{lemma2.12} has the following immediate corollaries, with the first proved by induction on $n$.

\begin{corollary}
\label{corollary2.13}
The first column vector in $W^{n}$ is $(u_{1}^{(n)},u_{2}^{(n)},\ldots $.
\end{corollary}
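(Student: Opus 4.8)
The plan is to prove, by induction on $n$, that for every $i\ge 1$ the $(i,1)$ entry of $W^{n}$ equals $u_{i}^{(n)}$; the assertion of the corollary is exactly the collection of these equalities, read off as the first column vector. For the base case $n=0$ I note that $W^{0}$ is the identity, so its $(i,1)$ entry is $1$ when $i=1$ and $0$ when $i>1$. This is precisely part (1) of Lemma \ref{lemma2.12}, so the base case is immediate.

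For the inductive step I would write $W^{n+1}=W\cdot W^{n}$ and expand the first column. Since $w_{i,k}$ is nonzero only for $k\in\{i-1,i,i+1\}$, the $(i,1)$ entry of $W^{n+1}$ is $\sum_{k}w_{i,k}(W^{n})_{k,1}$, a sum of at most three terms. For $i=1$ the column index $k=0$ does not occur, so this reduces to $w_{1,1}(W^{n})_{1,1}+w_{1,2}(W^{n})_{2,1}=D\,(W^{n})_{1,1}+C\,(W^{n})_{2,1}$; for $i>1$ it is $A\,(W^{n})_{i-1,1}+B\,(W^{n})_{i,1}+C\,(W^{n})_{i+1,1}$. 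Substituting the inductive hypothesis $(W^{n})_{k,1}=u_{k}^{(n)}$ turns the right-hand sides into those of parts (2) and (3) of Lemma \ref{lemma2.12}, which equal $u_{1}^{(n+1)}$ and $u_{i}^{(n+1)}$ respectively. This closes the induction.

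The one point that deserves attention, rather than being a real obstacle, is that $\Lambda$ is noncommutative, so the order of the factors matters. I would verify that the left-to-right order in the recurrences of Lemma \ref{lemma2.12} agrees with the order produced by left multiplication by $w_{i,k}$ in $W\cdot W^{n}$: in $w^{*}(\alpha)=U_{1}\cdots U_{l}$ the weight of the first step is the leftmost factor, and in $(W\cdot W^{n})_{i,1}=\sum_{k}w_{i,k}(W^{n})_{k,1}$ the entry $w_{i,k}$ likewise multiplies $(W^{n})_{k,1}$ on the left. Because these conventions coincide, no transposition of factors is required and the matching is exact; had I used $W^{n}\cdot W$ instead, I would have had to track the factors on the opposite side.
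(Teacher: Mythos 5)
Your proof is correct and is exactly the argument the paper intends: the paper states that Corollary \ref{corollary2.13} follows from Lemma \ref{lemma2.12} ``by induction on $n$'' and leaves the details implicit, while you have filled them in — base case from part (1), inductive step by expanding the first column of $W\cdot W^{n}$ and matching against parts (2) and (3). Your check that the left-multiplication convention in $W\cdot W^{n}$ agrees with the left-multiplication in the recurrences (reflecting that the first step of a walk contributes the leftmost factor of $w^{*}$) is a worthwhile point to make explicit in the noncommutative setting, but it does not constitute a different approach.
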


\begin{corollary}
\label{corollary2.14}
The $(1,1)$ co-efficient of $W^{n}$ is $\sum w^{*}(\alpha)$, the sum extending over all standard walks of length $n$ from $0$ to $0$. So $G(W)=G(w^{*})$.
\end{corollary}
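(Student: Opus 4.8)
The plan is to read the statement off Corollary~\ref{corollary2.13}, after first establishing that corollary by a one-step induction in which the recursions of Lemma~\ref{lemma2.12} are matched against multiplication by $W$. For the base case $n=0$ I would note that $W^{0}=I$, whose first column is $(1,0,0,\ldots)$, in agreement with part~(1) of Lemma~\ref{lemma2.12}. For the inductive step I would use $W^{n+1}=W\cdot W^{n}$, so that the first column of $W^{n+1}$ is obtained by applying $W$ to the first column of $W^{n}$; assuming inductively that the latter is $(u_{1}^{(n)},u_{2}^{(n)},\ldots)$, the $k$-th entry of the first column of $W^{n+1}$ is $\sum_{j} w_{k,j}\,u_{j}^{(n)}$.

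Next I would observe that, since $W$ is tridiagonal with $w_{1,1}=D$, $w_{i,i}=B$ for $i>1$, $w_{i,i+1}=C$, $w_{i+1,i}=A$ and all other entries $0$, this sum collapses to $D u_{1}^{(n)}+C u_{2}^{(n)}$ when $k=1$ and to $A u_{k-1}^{(n)}+B u_{k}^{(n)}+C u_{k+1}^{(n)}$ when $k>1$. These are exactly parts~(2) and~(3) of Lemma~\ref{lemma2.12}, so the $k$-th entry of the first column of $W^{n+1}$ equals $u_{k}^{(n+1)}$, completing the induction and hence Corollary~\ref{corollary2.13}.

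With Corollary~\ref{corollary2.13} available, the statement follows immediately. The $(1,1)$ entry of $W^{n}$ is the top entry of its first column, namely $u_{1}^{(n)}$, which by its definition in Lemma~\ref{lemma2.12} is $\sum w^{*}(\alpha)$ over standard walks of length $n$ from $1-1=0$ to $0$; this is the first assertion. For the second, I would recall that $G(W)=\sum_{n} a_{n} z^{n}$ with $a_{n}$ the $(1,1)$ entry of $W^{n}$, so $a_{n}=u_{1}^{(n)}$; summing over $n$ and grouping the standard walks from $0$ to $0$ by their length gives $G(W)=\sum_{n}\bigl(\sum_{l(\alpha)=n}w^{*}(\alpha)\bigr)z^{n}=\sum_{\alpha} w^{*}(\alpha)z^{l(\alpha)}=G(w^{*})$ by Definition~\ref{def2.6}.

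The main point to watch --- a bookkeeping matter rather than a genuine obstacle --- is the boundary case $k=1$: the recursion there carries $D$ rather than $B$ precisely because a step from height $0$ to height $0$ has weight $D$ under $w^{*}$, which matches the corner entry $w_{1,1}=D$ of $W$. Once this alignment between the $w^{*}$-weighting at the origin and the $(1,1)$-corner of $W$ is noted, the remainder is a routine matching of indices.
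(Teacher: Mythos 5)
Your proposal is correct and follows exactly the paper's route: the paper likewise derives Corollary~\ref{corollary2.14} by first establishing Corollary~\ref{corollary2.13} via induction on $n$ against the recursions of Lemma~\ref{lemma2.12} (the paper leaves this induction as ``immediate,'' while you spell it out, correctly keeping the weights on the left as required by $W^{n+1}=W\cdot W^{n}$ in the non-commutative ring $\Lambda$), and then reads off the $(1,1)$ entry as $u_{1}^{(n)}$ and sums over $n$.
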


It remains to prove Lemma \ref{lemma2.12}. (1) is evident. Let $\alpha$ be a standard walk of length $n$ from $0$ or $1$ to $0$. Then $\beta=(0,\alpha_{0},\ldots , \alpha_{n})$ is a standard walk of length $n+1$ from $0$ to $0$, and $w^{*}(\beta)$ is $Dw^{*}(\alpha)$ in the first case and $Cw^{*}(\alpha)$ in the second. Also each standard walk $\beta$ of length $n+1$ from $0$ to $0$ arises in this way from some $\alpha$; explicitly $\alpha = (\beta_{1}, \ldots , \beta_{n})$. Summing over $\beta$ we get (2). Similarly, suppose that $k>1$ and that $\alpha$ is a standard walk of length $n$ from $k-2$, $k-1$ or $k$ to $0$. Then $\beta = (k-1, \alpha_{0}, \ldots , \alpha_{n})$ is a standard walk of length $n+1$ from $k-1$ to $0$ and $w^{*}(\beta)=Aw^{*}(\alpha)$ in the first case, $Bw^{*}(\alpha)$ in the second, and $Cw^{*}(\alpha)$ in the third. Also, each standard walk $\beta$ of length $n+1$ arises from such an $\alpha$; explicitly $\alpha =(\beta_{1}, \ldots , \beta_{n})$. Summing over $\beta$ we get (3), completing the proof.\qed

\begin{remark}
\label{remark2.15}
To calculate the matrix entries of $G(W)$ explicitly as algebraic functions of $z$ by the method of Theorem \ref{theorem2.9} involves solving a system of $s^{2}$ quadratic equations in $s^{2}$ variables. This isn't practical when $s>2$; in the next section we give a different proof of Theorem \ref{theorem2.9} that is often better adapted to explicit calculations.
\end{remark}

\section{A partial fraction proof of Theorem \ref{theorem2.9}}
\label{section3}

\begin{theorem}
\label{theorem3.1}
$\sum w(\alpha)x^{\alpha_{0}}$, the sum extending over all length $n$ walks (not necessarily standard) with finish $0$, is the element $(Ax+B+Cx^{-1})^{n}$ of $\Lambda[x,x^{-1}]$.
\end{theorem}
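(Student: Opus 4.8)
The plan is to prove the identity by induction on $n$. Write $P = Ax + B + Cx^{-1} \in \Lambda[x,x^{-1}]$ and let $S_n = \sum w(\alpha)\, x^{\alpha_0}$ denote the sum over all length-$n$ walks with finish $0$; the goal is then $S_n = P^n$. For the base case $n=0$, the only length-$0$ walk with finish $0$ is the single point $(0)$, which has $w=1$ and $x^{\alpha_0}=x^0=1$, so $S_0 = 1 = P^0$.

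For the inductive step I would set up a bijection between length-$(n+1)$ walks $\alpha$ with finish $0$ and pairs consisting of a first step $s\in\{-1,0,1\}$ together with a length-$n$ walk $\alpha'$ with finish $0$: given $\alpha$, take $\alpha'=(\alpha_1,\dots,\alpha_{n+1})$ (whose finish is still $\alpha_{n+1}=0$) and $s=\alpha_1-\alpha_0$; conversely $\alpha=(\alpha'_0-s,\alpha')$, which is a legitimate walk since the first step equals $s$. By Definition~\ref{def2.3} one has $w(\alpha)=U_1\,w(\alpha')$, where $U_1$ is $A$, $B$, or $C$ according as $s$ is $-1$, $0$, or $+1$. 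Since $\alpha_0=\alpha'_0-s$ and $x$ is central, $x^{\alpha_0}=x^{-s}x^{\alpha'_0}$, so $w(\alpha)x^{\alpha_0}=(U_1 x^{-s})\,w(\alpha')x^{\alpha'_0}$, where the three cases give the \emph{left} factors $Ax$, $B$, and $Cx^{-1}$ respectively. Summing over the bijection then factors out $\sum_s U_1 x^{-s}=P$ on the left, yielding $S_{n+1}=P\cdot S_n$, and the induction hypothesis gives $S_{n+1}=P^{n+1}$.

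The one point that needs care is the non-commutativity of $\Lambda$. Because $w(\alpha)=U_1\cdots U_l$ orders the step-weights from left to right, stripping off the \emph{first} step (rather than the last) is precisely what makes the new factor appear on the left, matching the order needed to build $P^{n+1}=P\cdot P^n$; a last-step decomposition would break the clean induction, since the prefix would have finish $\alpha_n\neq 0$. The only facts about $x$ that I need are that it is central in $\Lambda[x,x^{-1}]$—so the powers $x^{-s}$ may be moved freely past elements of $\Lambda$—and that the exponent bookkeeping $\alpha_0=\alpha'_0-s$ converts a step of size $s$ into the factor $x^{-s}$, pairing $A\leftrightarrow x$, $B\leftrightarrow 1$, $C\leftrightarrow x^{-1}$ exactly as in $P$. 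Beyond verifying the bijection and this bookkeeping, the argument is routine.
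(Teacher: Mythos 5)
Your proof is correct and is essentially the paper's own argument: the paper also inducts on $n$, proving $f_{n+1}=(Ax+B+Cx^{-1})f_n$ by stripping the first step of each walk, just phrased coefficient-wise as the recursion $v_k^{(n+1)}=Av_{k-1}^{(n)}+Bv_k^{(n)}+Cv_{k+1}^{(n)}$ (borrowed from the proof of Lemma~\ref{lemma2.12}(3)) rather than as a single bijection with the powers of $x$ carried along. The two formulations are the same decomposition, with the same left-multiplication bookkeeping for non-commutativity.
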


\begin{proof}
Denote the sum by $f_{n}$. Since $f_{0}=1$ it's enough to show that $f_{n+1}=(Ax+B+Cx^{-1})f_{n}$. Let $v_{k}^{(n)}$ be the co-efficient of $x^{k}$ in $f_{n}$. Then $v_{k}^{(n)}=\sum w(\alpha)$, the sum extending over all length $n$ walks from $k$ to $0$. The proof of (3) of Lemma \ref{lemma2.12}, using all walks rather than all standard walks, shows that $v_{k}^{(n+1)}=Av_{k-1}^{(n)}+Bv_{k}^{(n)}+Cv_{k+1}^{(n)}$ for all $k$ in $Z$, giving the result. \qed
\end{proof}

%\clearpage % to keep lines of following definition together
\begin{definition}
\label{def3.2}
\hspace{2em}\vspace{-2ex}
\begin{itemize}
\item[] $M_{0}(w)=\sum w(\alpha)z^{l(\alpha)}$, the sum extending over all\, $0$ to $0$ walks.
\item[] $M_{-1}(w)$ is the sum extending over all\, $-1$ to $0$ (or $0$ to $1$) walks.
\item[] $M_{1}$ is the sum extending over all\, $1$ to $0$ (or $0$ to $-1$) walks.
\end{itemize}
\end{definition}

We'll generally omit the $w$ and just write $M_{0}$, $M_{-1}$ or $M_{1}$.

\begin{corollary}
\label{corollary3.3}
Suppose that $i=0$, $-1$ or $1$. Then $M_{i}$ is the co-efficient of $x^{i}$ in the element $\sum_{0}^{\infty}(Ax+B+Cx^{-1})^{n}z^{n}$ of $\Lambda[x,x^{-1}][[z]]$.
\end{corollary}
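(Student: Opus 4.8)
The plan is to read $M_i$ straight off Theorem \ref{theorem3.1} by extracting the coefficient of $x^i$ from each power $(Ax+B+Cx^{-1})^n$ and then assembling these coefficients into a power series in $z$. There is almost nothing to do beyond bookkeeping, so the main effort is to phrase the coefficient extraction carefully in the ring $\Lambda[x,x^{-1}][[z]]$.

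First I would recall that Theorem \ref{theorem3.1} identifies $(Ax+B+Cx^{-1})^n$ with $\sum w(\alpha)x^{\alpha_0}$, the sum over all length $n$ walks with finish $0$. Since each such power is a Laurent polynomial in $x$, extracting the coefficient of $x^i$ isolates exactly the walks with $\alpha_0=i$ and $\alpha_n=0$, that is, the length $n$ walks from $i$ to $0$; the resulting coefficient in $\Lambda$ is $\sum w(\alpha)$ over those walks. Next, multiplying by $z^n$ and summing over $n\ge 0$, the coefficient of $x^i$ in $\sum_0^\infty (Ax+B+Cx^{-1})^n z^n$ becomes $\sum_{n} \bigl(\sum_{\alpha:\,i\to 0,\,l(\alpha)=n} w(\alpha)\bigr) z^n = \sum_{\alpha:\,i\to 0} w(\alpha) z^{l(\alpha)}$, which is precisely $M_i$ as defined in Definition \ref{def3.2}. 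Taking $i=0,-1,1$ in turn gives the three asserted identities.

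The one point that needs care, and which I expect to be the only real obstacle, is justifying that the coefficient-of-$x^i$ operation is legitimate and termwise in $\Lambda[x,x^{-1}][[z]]$. An element of this ring is a power series $\sum_n c_n(x)z^n$ with each $c_n\in\Lambda[x,x^{-1}]$, and here $c_n=(Ax+B+Cx^{-1})^n$ has $x$-degrees confined to the range $-n$ through $n$; in particular no single power $x^i$ receives contributions from more than one value of $z^n$ in a way that would mix degrees, so coefficient extraction may be carried out degree by degree in $z$ and yields the well-defined element $\sum_n \bigl([x^i]c_n\bigr)z^n$ of $\Lambda[[z]]$. Finally, I would remark that the two descriptions of $M_{-1}$ and of $M_1$ offered in Definition \ref{def3.2} agree because $w(\alpha)$ depends only on the successive differences $\alpha_j-\alpha_{j-1}$, so that shifting a walk leaves its weight unchanged; thus ``$-1$ to $0$'' and ``$0$ to $1$'' walks (and likewise the pair for $M_1$) have identical weighted generating functions, and matching $M_{-1}$ to the $x^{-1}$ coefficient via the ``$-1$ to $0$'' reading causes no ambiguity.
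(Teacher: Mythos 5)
Your proof is correct and follows the same route the paper intends: the corollary is an immediate consequence of Theorem \ref{theorem3.1}, obtained by multiplying by $z^{n}$, summing over $n$, and extracting the coefficient of $x^{i}$, which the paper treats as self-evident and states without a separate proof. Your extra care about termwise coefficient extraction in $\Lambda[x,x^{-1}][[z]]$ and the shift-invariance of $w(\alpha)$ reconciling the two readings of $M_{\pm 1}$ in Definition \ref{def3.2} is sound and fills in exactly the bookkeeping the paper leaves implicit.
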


\begin{definition}
\label{def3.4}
$J_{0}=J_{0}(w)$ is $\sum w(\alpha)z^{l(\alpha)}$, the sum extending over all primitive $0$ to $0$ walks.
\end{definition}

\begin{theorem}
\label{theorem3.5}
\hspace{2em}\vspace{-2ex}
\begin{enumerate}
\item[(1)] $M_{0} = 1 + J_{0}+J_{0}^{2} + \cdots$.
\item[(2)] $G(w) = M_{0} - M_{1}M_{0}^{-1}M_{-1}$.
\end{enumerate}
\end{theorem}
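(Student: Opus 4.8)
The plan is to treat the two parts separately, with (1) a direct analogue of Lemma~\ref{lemma2.7}(1) and (2) carrying the real content. For (1) I would use the first-return decomposition: every $0$ to $0$ walk of positive length is uniquely the concatenation of its initial primitive piece (the run up to the first return to $0$) with a shorter $0$ to $0$ walk. Iterating, each $0$ to $0$ walk factors uniquely as a concatenation of primitive $0$ to $0$ walks, and since $w(\alpha\beta)=w(\alpha)w(\beta)$ this gives $M_{0}=1+J_{0}+J_{0}^{2}+\cdots$ exactly as in Lemma~\ref{lemma2.7}. In particular $M_{0}\equiv 1 \pmod z$, so $M_{0}$ is invertible in $\Lambda[[z]]$, a fact that part (2) will need.

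For (2) the idea is to produce three ``first/last passage'' factorizations and then recombine them. First I would compute $M_{1}$ by cutting a $0$ to $-1$ walk at its first visit to $-1$: the initial segment is a standard $0$ to $0$ walk followed by a single down-step, and the remainder is an arbitrary $-1$ to $-1$ walk; this yields $M_{1}=zGAM_{0}$, where $G=G(w)$. Dually, cutting a $0$ to $1$ walk at its last visit to $0$ produces an arbitrary $0$ to $0$ walk, then a single up-step, then a walk that stays $\ge 1$, giving $M_{-1}=zM_{0}CG$. The noncommutativity forces me to keep the factors $A,C,G,M_{0}$ in exactly these orders; choosing first-passage for $M_{1}$ and last-passage for $M_{-1}$ is precisely what makes the weights line up.

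Next I would split the $0$ to $0$ walks themselves: such a walk either stays $\ge 0$, contributing $G$, or it reaches $-1$. In the latter case I cut at the first and the last visit to $-1$, obtaining a standard $0$ to $0$ walk, a down-step, an arbitrary $-1$ to $-1$ walk, an up-step, and a final standard $0$ to $0$ walk; summing these contributions gives the walks that dip below $0$ as $z^{2}GAM_{0}CG$. Hence $M_{0}=G+z^{2}GAM_{0}CG$. Finally, using the two formulas above, $z^{2}GAM_{0}CG=(zGAM_{0})M_{0}^{-1}(zM_{0}CG)=M_{1}M_{0}^{-1}M_{-1}$, and rearranging yields $G=M_{0}-M_{1}M_{0}^{-1}M_{-1}$.

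The main obstacle I anticipate is the bookkeeping in the noncommutative setting: each passage decomposition must be a genuine bijection, so that I neither miss nor double-count walks, and the cut points must be chosen so that the weight of each walk is the product of the pieces' weights in the correct left-to-right order. Verifying that the ``first visit to $-1$'' and ``last visit to $-1$'' cuts really do isolate the claimed segments --- for instance that after the last visit the walk cannot return to $-1$, so its tail is genuinely standard --- is the one place where a careful combinatorial argument, rather than a purely formal manipulation, is required.
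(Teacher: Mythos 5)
Your proof is correct, but your argument for part (2) takes a genuinely different route from the paper's. You establish three bijective passage decompositions --- $M_{1}=zGAM_{0}$ (cut a $0$ to $-1$ walk at its first visit to $-1$), $M_{-1}=zM_{0}CG$ (cut a $0$ to $1$ walk at its last visit to $0$), and $M_{0}=G+z^{2}GAM_{0}CG$ (cut a non-standard $0$ to $0$ walk at its first and last visits to $-1$) --- and then finish by pure algebra, using the invertibility of $M_{0}$ supplied by part (1); your weight orderings and the verification that the tails of the cuts stay on the correct side all check out. The paper instead substitutes $M_{0}^{-1}=1-J_{0}$, so that (2) becomes $G(w)=M_{0}+M_{1}J_{0}M_{-1}-M_{1}M_{-1}$, and verifies this single identity by a signed-multiplicity count: the right-hand side equals $\sum w(\alpha)\left(1+r_{1}(\alpha)-r(\alpha)\right)z^{l(\alpha)}$ over \emph{all} $0$ to $0$ walks, where $r(\alpha)$ counts splittings into a $0$ to $-1$ walk followed by a $-1$ to $0$ walk, and $r_{1}(\alpha)$ counts splittings with a primitive $-1$ to $-1$ walk inserted in the middle; if $\alpha$ visits $-1$ exactly $r>0$ times then $r(\alpha)=r$ and $r_{1}(\alpha)=r-1$, so non-standard walks contribute $0$ while standard walks contribute $1$. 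Your route costs three separate decomposition arguments (each needing the cut-point care you describe) but yields as a by-product explicit factorizations of $M_{1}$ and $M_{-1}$ in terms of $G$ and $M_{0}$, which the paper never obtains; the paper's inclusion-exclusion count handles everything in one enumeration and avoids any bijection involving first or last passages. Both rest on the same noncommutative multiplicativity $w(\alpha\beta)=w(\alpha)w(\beta)$, and your treatment of part (1) is identical to the paper's.
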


\begin{proof}
(1) follows from the multiplicative property of $w$, as in the proof of Lemma \ref{lemma2.7}. So $M_{0}^{-1}=1-J_{0}$, and (2) asserts that $G(w)=M_{0}+M_{1}J_{0}M_{-1}-M_{1}M_{-1}$. If $\alpha$ is a walk from $0$ to $0$ let $r(\alpha)$ be the number of ways of writing $\alpha$ as a concatenation of a walk from $0$ to $-1$ and a walk from $-1$ to $0$. Also let $r_{1}(\alpha)$ be the number of ways of writing $\alpha$ as a concatenation of a walk from $0$ to $-1$, a primitive walk from $-1$ to $-1$ and a walk from $-1$ to $0$. The multiplicative property of $w$ shows that $M_{0} + M_{1}J_{0}M_{-1}-M_{1}M_{-1}=\sum w(\alpha)(1 + r_{1}(\alpha) - r(\alpha))z^{l(\alpha)}$, the sum extending over all walks from $0$ to $0$. If $\alpha$ is standard, $r_{1}(\alpha)=r(\alpha)=0$. If $\alpha$ is not standard there is an $i$ with $\alpha_{i}=-1$. Let $i_{1}<i_{2}<\cdots < i_{r}$ be those $i$ with $\alpha_{i}=-1$. One sees immediately that $r(\alpha)=r$ and that $r_{1}(\alpha)=r-1$. So $M_{0}+M_{1}J_{0}M_{-1}-M_{1}M_{-1}$ is the sum over the standard walks from $0$ to $0$ of $w(\alpha)z^{l(\alpha)}$, and this is precisely $G(w)$.\qed
\end{proof}

Suppose now that $\Lambda = M_{s}(F)$, $F$ a field, so that $M_{0}$, $M_{1}$ and $M_{-1}$ may be viewed as $s$ by $s$ matrices with entries in $F[[z]]$. Theorem \ref{theorem3.5}, (2), will give a new proof of Theorem \ref{theorem2.9} once we show that these matrix entries are algebraic over $F(z)$. The facts about the matrix entries of $M_{0}$, $M_{1}$ and $M_{-1}$ follow from a standard partial fraction decomposition argument---we'll give our own version.

The algebraic closure of the field of fractions of $F[[z]]$ is a valued field with value group $Q$. let $\Omega$ be the completion of this field and $\ord:\Omega \rightarrow Q\cup \{\infty\}$ be the $\ord$ function in $\Omega$. Let $\Omega^{\prime}$ consist of formal power series $\sum_{-\infty}^{\infty} a_{i}x^{i}$ with $a_{i}\in \Omega$ and $\ord\ a_{i}\rightarrow \infty$ as $|i|\rightarrow \infty$. $\Omega^{\prime}$ has an obvious multiplication and is an overring of $F[x,x^{-1}][[z]]$. $l_{0}$, $l_{1}$ and $l_{-1}$ are the $\Omega$-linear maps $\Omega^{\prime}\rightarrow\Omega$ taking $\sum a_{i}x^{i}$ to $a_{0}$, $a_{1}$ and $a_{-1}$. Note that $\overline{F(z)}$, the algebraic closure of $F(z)$, imbeds in $\Omega$.

\begin{lemma}
\label{lemma3.6}
Suppose $\lambda \in \overline{F(z)}$ with $\ord\ \lambda \ne 0$. Then the element $x-\lambda$ of $\Omega^{\prime}$ is invertible, and for all $k\ge 1$, $(x-\lambda)^{-k}=\sum_{-\infty}^{\infty} a_{i}x^{i}$ in $\Omega^{\prime}$ with the $a_{i}$ in $\overline{F(z)}$. In particular, $l_{0}$, $l_{1}$ and $l_{-1}$ take each $(x-\lambda)^{-k}$ to an element of $\overline{F(z)}$.
\end{lemma}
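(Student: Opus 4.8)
The plan is to exploit the hypothesis $\ord\ \lambda \ne 0$ by expanding $(x-\lambda)^{-1}$ as a geometric series in whichever direction actually converges in $\Omega'$; the sign of $\ord\ \lambda$ decides which. First I would dispose of the case $\ord\ \lambda>0$. Here I write $x-\lambda=x(1-\lambda x^{-1})$ and set $g=\sum_{i\ge 0}\lambda^{i}x^{-i-1}$. Since the coefficient of $x^{-i-1}$ is $\lambda^{i}$, with $\ord\ \lambda^{i}=i\cdot\ord\ \lambda\to\infty$, the series $g$ genuinely lies in $\Omega'$; and a one-line telescoping computation shows $(x-\lambda)g=1$, so $x-\lambda$ is invertible with $(x-\lambda)^{-1}=g$. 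The case $\ord\ \lambda<0$ is symmetric: writing $x-\lambda=-\lambda(1-\lambda^{-1}x)$ gives $(x-\lambda)^{-1}=-\sum_{i\ge 0}\lambda^{-i-1}x^{i}$, whose coefficients $-\lambda^{-i-1}$ have order $(i+1)|\ord\ \lambda|\to\infty$, so again the series lies in $\Omega'$ and telescopes to an inverse. Note that $\ord\ \lambda\ne 0$ is used precisely to make one of these two series converge; when $\ord\ \lambda=0$ neither does.

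For the higher powers I would avoid multiplying the series out term by term, since that would express each coefficient as an infinite sum and only place it a priori in the completion $\Omega$. Instead, since $\Omega'$ is a ring and $(x-\lambda)^{-1}$ has been identified, $(x-\lambda)^{-k}$ is simply its $k$-th power, which I compute in closed form by the negative binomial expansion. In the case $\ord\ \lambda>0$ this gives
$$(x-\lambda)^{-k}=x^{-k}(1-\lambda x^{-1})^{-k}=\sum_{j\ge 0}\binom{k+j-1}{j}\lambda^{j}x^{-k-j},$$
and in the case $\ord\ \lambda<0$,
$$(x-\lambda)^{-k}=(-\lambda)^{-k}(1-\lambda^{-1}x)^{-k}=\sum_{j\ge 0}(-1)^{k}\binom{k+j-1}{j}\lambda^{-k-j}x^{j}.$$
The decisive point is that each coefficient here is a single monomial: an integer (the image of $\binom{k+j-1}{j}$ in the prime field of $F$) times a power of $\lambda$. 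Because $\lambda\in\overline{F(z)}$ and $\overline{F(z)}$ is a field, every such coefficient lies in $\overline{F(z)}$, and its order is at least $j\cdot\ord\ \lambda$ (resp. $(k+j)|\ord\ \lambda|$), which tends to $\infty$; hence the series really is the element $(x-\lambda)^{-k}$ of $\Omega'$ and all of its coefficients lie in $\overline{F(z)}$. Applying $l_{0},l_{1},l_{-1}$ merely selects the coefficients of $x^{0}$, $x^{1}$ and $x^{-1}$, so the final assertion follows at once.

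I expect the only real subtlety to be the one already flagged: keeping the coefficients inside $\overline{F(z)}$ rather than $\Omega$. This is exactly why the closed-form negative binomial coefficients matter—each is a finite expression in $\lambda$—whereas a brute-force product of the geometric series for $(x-\lambda)^{-1}$ would present each coefficient as a convergent infinite sum whose membership in $\overline{F(z)}$ would then require a separate, and unnecessary, argument. A minor bookkeeping point is to confirm that the integer binomial coefficients, viewed in $F$, have nonnegative order (order $0$, or $\infty$ if they vanish in positive characteristic), so that they never spoil the estimate $\ord\ a_{i}\to\infty$.
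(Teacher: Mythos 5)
Your proof is correct and takes essentially the same route as the paper's: split on the sign of $\ord\ \lambda$, factor $x-\lambda$ as $x(1-\lambda x^{-1})$ or $-\lambda(1-\lambda^{-1}x)$, and invert by a geometric series whose coefficients are powers of $\lambda$. The paper leaves the passage to $(x-\lambda)^{-k}$ implicit, and your negative-binomial expansion fills that in cleanly, though the worry motivating it is unfounded: each inverse is a one-sided series (only negative, respectively only nonnegative, powers of $x$), so the coefficient of any fixed $x^{i}$ in its $k$-th power is automatically a \emph{finite} sum of products of powers of $\lambda$, hence already in $\overline{F(z)}$ without needing the closed form.
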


\begin{proof}
If $\ord\ \lambda >0$, $x-\lambda = x(1-\lambda x^{-1})$ has inverse $x^{-1}(1+\lambda x^{-1}+\lambda^{2}x^{-2}+\cdots )$, while if $\ord\ \lambda <0$, $x-\lambda = -\lambda(1-\lambda^{-1}x)$ has inverse $-\lambda^{-1}(1+\lambda^{-1}x+\lambda^{-2}x^{2}+\cdots )$. \qed
\end{proof}

\begin{lemma}
\label{lemma3.7}
Let $U_{1}$ and $U_{2}$ be elements of $F[z,x]$. Suppose that $U_{2}\equiv x^{s}\mod z$ for some $s$. Then $U_{2}$ has an inverse in $F[x,x^{-1}][[z]]$ and the co-efficients of $x^{0}$, $x^{1}$ and $x^{-1}$ in the element $U_{1}U_{2}^{-1}$ of $F[x,x^{-1}][[z]]$ all lie in $\overline{F(z)}$.

\end{lemma}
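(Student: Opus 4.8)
The plan is to reduce the statement about the coefficients of $x^0,x^1,x^{-1}$ in $U_1 U_2^{-1}$ to a partial-fraction decomposition, then invoke Lemma~\ref{lemma3.6} term by term. First I would check that $U_2$ is invertible in $F[x,x^{-1}][[z]]$: since $U_2\equiv x^s \bmod z$, writing $U_2 = x^s(1 + z g)$ with $g\in F[x,x^{-1},z]$ (after factoring out $x^s$, which is a unit) gives an inverse via the geometric series $x^{-s}(1 - zg + z^2 g^2 - \cdots)$, which converges in $F[x,x^{-1}][[z]]$ because each power of $z$ contributes only finitely many powers of $x$. This also shows $U_2^{-1}\in \Omega'$, so the linear maps $l_0, l_1, l_{-1}$ make sense on $U_1 U_2^{-1}$.

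Next I would factor $U_2$ over $\overline{F(z)}$. Viewing $U_2$ as a polynomial in $x$ with coefficients in $F[z]\subset \overline{F(z)}$, it factors as $c\prod_k (x-\lambda_k)^{m_k}$ with $c\in F[z]$ and roots $\lambda_k\in\overline{F(z)}$. The congruence $U_2\equiv x^s \bmod z$ forces the leading coefficient to be a unit mod $z$ (so $\ord\, c = 0$, after normalizing) and forces every root to satisfy $\ord\,\lambda_k \ne 0$: indeed the product of the roots, up to sign, is the constant term over the leading term, and the shape $x^s \bmod z$ means $U_2$ has a root configuration degenerating to $x=0$ as $z\to 0$, so no root can have $\ord\,\lambda_k = 0$. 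This is the hypothesis needed to apply Lemma~\ref{lemma3.6}.

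Then I would perform the partial-fraction expansion of $U_1 U_2^{-1}$ over $\overline{F(z)}$. Carrying out polynomial division of $U_1$ by $U_2$ in $\overline{F(z)}[x]$ writes
\[
U_1 U_2^{-1} = Q(x) + \sum_k \sum_{j=1}^{m_k} \frac{c_{k,j}}{(x-\lambda_k)^{j}},
\]
where $Q\in\overline{F(z)}[x]$ and all $c_{k,j}\in\overline{F(z)}$. The point is that this identity, derived in the fraction field $\overline{F(z)}(x)$, also holds inside $\Omega'$: each $(x-\lambda_k)^{-j}$ is the genuine inverse furnished by Lemma~\ref{lemma3.6} (since $\ord\,\lambda_k\ne 0$), and the polynomial part $Q(x)$ has only finitely many terms, so applying $l_0, l_1, l_{-1}$ to the whole expression is legitimate and additive.

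Finally, applying $l_i$ for $i\in\{0,1,-1\}$ gives $l_i(U_1 U_2^{-1}) = l_i(Q) + \sum_{k,j} c_{k,j}\, l_i\big((x-\lambda_k)^{-j}\big)$. The coefficient $l_i(Q)$ lies in $\overline{F(z)}$ trivially, and by Lemma~\ref{lemma3.6} each $l_i\big((x-\lambda_k)^{-j}\big)\in\overline{F(z)}$, with the $c_{k,j}$ also in $\overline{F(z)}$; since $\overline{F(z)}$ is a field, the finite sum lies in $\overline{F(z)}$, which is the claim. The main obstacle I anticipate is the bookkeeping step showing that the partial-fraction identity, valid in $\overline{F(z)}(x)$, transfers to an identity in $\Omega'$ to which the coefficient-extraction maps $l_i$ can be applied term by term; once one checks that $U_2^{-1}$ computed inside $\Omega'$ agrees with the sum of the $(x-\lambda_k)^{-j}$ pieces (both are inverses, hence equal by uniqueness), the rest is the routine application of Lemma~\ref{lemma3.6}.
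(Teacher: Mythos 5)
Your overall architecture matches the paper's proof: invert $U_2$ by a geometric series, factor $U_2$ over $\overline{F(z)}$, take partial fractions, and apply Lemma~\ref{lemma3.6} together with the $\overline{F(z)}$-linearity of $l_0$, $l_1$, $l_{-1}$; your care in transferring the partial-fraction identity into $\Omega'$ via uniqueness of inverses is also fine (and slightly more explicit than the paper). But the one step carrying the real content --- that no root $\lambda_k$ of $U_2$ has $\ord\,\lambda_k=0$, which is precisely the hypothesis of Lemma~\ref{lemma3.6} --- is where your argument breaks down. Both claims you offer in its support are false in general. First, $U_2\equiv x^s \bmod z$ does \emph{not} force the leading coefficient of $U_2$ (as a polynomial in $x$) to be a unit mod $z$: the polynomial $U_2=x+zx^3$ satisfies $U_2\equiv x \bmod z$ but has leading coefficient $z$. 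Second, the roots need not degenerate to $x=0$ as $z\to 0$: in that same example the nonzero roots are $\pm(-1/z)^{1/2}$, of $\ord$ equal to $-1/2$, and they run off to infinity. Finally, the product-of-roots observation proves nothing about individual roots: a product of positive $\ord$ is entirely consistent with some individual factor having $\ord$ exactly $0$ (ords $1$ and $0$, say). So as written, the crucial hypothesis of Lemma~\ref{lemma3.6} is unverified.

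The gap is real but entirely fixable, and the paper's proof shows how: write $U_2=x^s(1-zp)$ with $p\in F[x,x^{-1},z]$, which is possible exactly because $U_2\equiv x^s\bmod z$. If $\lambda\in\Omega$ has $\ord\,\lambda=0$, then $\ord\,\lambda^{-1}=0$ as well, so every monomial of $p(\lambda,\lambda^{-1},z)$ has $\ord\ge 0$ and hence $zp(\lambda,\lambda^{-1},z)$ has $\ord\ge 1$; therefore $1-zp(\lambda,\lambda^{-1},z)$ has $\ord$ zero and is nonzero, whence $U_2(\lambda)=\lambda^s\left(1-zp(\lambda,\lambda^{-1},z)\right)\ne 0$. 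Thus no root of $U_2$ in $\overline{F(z)}$ can have $\ord$ zero. (Equivalently, by a Newton polygon argument: the coefficient of $x^s$ in $U_2$ has $\ord$ zero while every other coefficient has $\ord\ge 1$, so $(s,0)$ is the unique lowest vertex and no segment of the polygon has slope $0$.) With this substituted for your second paragraph, the rest of your proof goes through and coincides with the paper's.
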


\begin{proof}
Write $U_{2}$ as $x^{s}(1-zp)$ with $p$ in $F[x,x^{-1},z]$. Then $x^{-s}(1+zp+z^{2}p^{2}+\cdots )$ is the desired inverse of $U_{2}$. If $\lambda$ in $\Omega$ has $\ord\ 0$ then $1-zp(\lambda,\lambda^{-1},z)$ has $\ord\ 0$ and cannot be $0$. So when we factor $U_{2}$ in $\overline{F(z)}[x]$ as $q\cdot \Pi(x-\lambda_{i})^{c_{i}}$ with $q$ in $F[z]$ and $\lambda_{i}$ in $\overline{F(z)}$, no $\ord\ (\lambda_{i})$ can be $0$. View $U_{1}U_{2}^{-1}$ as an element of $\overline{F(z)}(x)$. As such it is an $\overline{F(z)}$ linear combination of powers of $x$ and powers of the $(x-\lambda_{i})^{-1}$. Since $l_{0}$, $l_{1}$ and $l_{-1}$ are $\Omega$-linear they are $\overline{F(z)}$-linear. Lemma \ref{lemma3.6} then tells us that $U_{1}U_{2}^{-1}$, viewed as an element of $\Omega^{\prime}$, is mapped by each of $l_{0}$, $l_{1}$ and $l_{-1}$ to an element of $\overline{F(z)}$. This completes the proof. \qed
\end{proof}

\begin{lemma}
\label{lemma3.8}
Let $A$, $B$ and $C$ be in $M_{s}(F)$ and $u\in F[x,x^{-1}][[z]]$ be an entry in the matrix $\left(I_{s}-z(Ax+ B+Cx^{-1})\right)^{-1}$. Then the co-efficients of $x^{0}$, $x^{1}$ and $x^{-1}$ in $u$ all lie in $\overline{F(z)}$.
\end{lemma}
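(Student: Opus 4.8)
The plan is to reduce the statement to Lemma \ref{lemma3.7} by way of Cramer's rule, after clearing the negative powers of $x$. Write $M=I_{s}-z(Ax+B+Cx^{-1})$, so that $u$ is an entry of $M^{-1}$. Since $M\equiv I_{s}\bmod z$, the matrix $M$ is invertible over the commutative ring $F[x,x^{-1}][[z]]$, and $\det M\equiv 1\bmod z$ is a unit there. The difficulty in applying Lemma \ref{lemma3.7} directly is that its hypothesis demands honest polynomials $U_{1},U_{2}\in F[z,x]$, whereas the entries of $M^{-1}$ a priori only lie in $F[x,x^{-1}][[z]]$.

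To produce polynomials I would first pass from $M$ to $xM$, whose $(i,j)$ entry is $\delta_{ij}x-z(a_{ij}x^{2}+b_{ij}x+c_{ij})$, an element of $F[x,z]$. Thus $xM\in M_{s}(F[x,z])$, so its adjugate $\mathrm{adj}(xM)$ and its determinant $\det(xM)$ again have entries in $F[x,z]$. Reducing mod $z$ gives $xM\equiv xI_{s}$, whence $\det(xM)\equiv x^{s}\bmod z$; this is exactly the condition imposed on $U_{2}$ in Lemma \ref{lemma3.7}.

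Next I would put $u$ into the shape required by that lemma. Using $\det(xM)=x^{s}\det M$ and $\mathrm{adj}(xM)=x^{s-1}\mathrm{adj}(M)$ (the scalar factor $xI_{s}$ pulls out, via $\mathrm{adj}(xI_{s})=x^{s-1}I_{s}$), Cramer's rule $M^{-1}=(\det M)^{-1}\mathrm{adj}(M)$ rearranges to
\[
u=(M^{-1})_{ij}=\bigl(x\cdot\mathrm{adj}(xM)_{ij}\bigr)\cdot\det(xM)^{-1}.
\]
Setting $U_{1}=x\cdot\mathrm{adj}(xM)_{ij}$ and $U_{2}=\det(xM)$, both lie in $F[x,z]$ and $U_{2}\equiv x^{s}\bmod z$, so Lemma \ref{lemma3.7} tells us that the coefficients of $x^{0}$, $x^{1}$ and $x^{-1}$ in $U_{1}U_{2}^{-1}$ lie in $\overline{F(z)}$.

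The one point requiring care---and the only real obstacle---is to confirm that the element $U_{1}U_{2}^{-1}$ furnished by the geometric-series inverse of Lemma \ref{lemma3.7} genuinely coincides with the matrix entry $u$. Here I would invoke that all the manipulations take place inside the commutative ring $F[x,x^{-1}][[z]]$: in it $M$ has a unique inverse, and $U_{2}=\det(xM)$ is a unit (being $\equiv x^{s}$, a unit, mod $z$), so its inverse is unambiguous. Uniqueness of inverses then forces the displayed identity to hold as an equality in $F[x,x^{-1}][[z]]$, and the lemma follows at once.
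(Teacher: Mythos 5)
Your proposal is correct and takes essentially the same route as the paper: the paper also writes $u=U_{1}/U_{2}$ with $U_{2}=\det\left(xI_{s}-z(Ax^{2}+Bx+C)\right)=\det(xM)\equiv x^{s}\bmod z$ and then applies Lemma \ref{lemma3.7}. The only difference is that the paper leaves the Cramer's-rule bookkeeping (and the identification of $U_{1}U_{2}^{-1}$ with $u$ in $F[x,x^{-1}][[z]]$) implicit, whereas you spell it out.
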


\begin{proof}
$u$ may be written as $U_{1}/U_{2}$ where $U_{1}$ and $U_{2}$ are in $F[z,x]$ and $U_{2}=\det\left(xI_{s}-z(Ax^{2}+ Bx+C)\right)$. Then $U_{2}\equiv x^{s}\mod z$, and we apply Lemma \ref{lemma3.7}.\qed
\end{proof}

\begin{corollary}
\label{corollary3.9}
If $\Lambda=M_{s}(F)$, $F$ a field, then the matrix entries of $M_{0}$, $M_{1}$ and $M_{-1}$ are algebraic over $F(z)$. (So by Theorem \ref{theorem3.5} the same is true of the matrix entries of $G(w)$.)
\end{corollary}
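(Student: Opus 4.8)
The plan is to read off $M_0$, $M_1$, $M_{-1}$ directly from Lemma \ref{lemma3.8} by way of Corollary \ref{corollary3.3}. First I would observe that the geometric series $\sum_{0}^{\infty}(Ax+B+Cx^{-1})^{n}z^{n}$ appearing in Corollary \ref{corollary3.3} is, in the $z$-adically complete ring $M_{s}(F)[x,x^{-1}][[z]]$, exactly the inverse $\left(I_{s}-z(Ax+B+Cx^{-1})\right)^{-1}$, since $z(Ax+B+Cx^{-1})$ has positive $z$-order and the partial sums of the series telescope against $\left(I_{s}-z(Ax+B+Cx^{-1})\right)$. Hence for $i\in\{0,-1,1\}$ the matrix $M_{i}$ is obtained by taking, in each of the $s^{2}$ entries of $\left(I_{s}-z(Ax+B+Cx^{-1})\right)^{-1}$, the coefficient of $x^{i}$.

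Next I would apply Lemma \ref{lemma3.8} entrywise. Each entry $u$ of $\left(I_{s}-z(Ax+B+Cx^{-1})\right)^{-1}$ lies in $F[x,x^{-1}][[z]]$, and the lemma asserts precisely that the coefficients of $x^{0}$, $x^{1}$ and $x^{-1}$ in $u$ lie in $\overline{F(z)}$. Since every matrix entry of $M_{0}$, $M_{1}$, $M_{-1}$ is one of these coefficients, all such entries lie in $\overline{F(z)}$, i.e. are algebraic over $F(z)$. This establishes the main assertion.

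For the parenthetical claim I would invoke Theorem \ref{theorem3.5}(2), which gives $G(w)=M_{0}-M_{1}M_{0}^{-1}M_{-1}$. The matrix $M_{0}$ is invertible in $M_{s}(F[[z]])$ because $M_{0}\equiv I_{s}\bmod z$, and by Cramer's rule the entries of $M_{0}^{-1}$ are rational functions of the entries of $M_{0}$; as those entries are algebraic over $F(z)$, and the algebraic elements over a field form a field, the entries of $M_{0}^{-1}$ are algebraic as well. The remaining operations building $M_{0}-M_{1}M_{0}^{-1}M_{-1}$ are matrix addition and multiplication, which again preserve algebraicity entrywise, so the entries of $G(w)$ are algebraic over $F(z)$.

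The step I expect to carry the most weight is the identification of the geometric series with the matrix inverse and the bookkeeping that makes Lemma \ref{lemma3.8} applicable---namely verifying that the formal identity holds in the noncommutative series ring and that extracting the coefficient of $x^{i}$ entrywise genuinely recovers the matrices $M_{i}$ of Definition \ref{def3.2}. Once this dictionary is in place, Lemma \ref{lemma3.8} supplies all the real content, and the rest is the routine observation that algebraicity over $F(z)$ survives the finitely many field and matrix operations involved.
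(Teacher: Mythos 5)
Your proposal is correct and follows the paper's own argument exactly: the paper proves this corollary by observing that $\left(I_{s}-z(Ax+B+Cx^{-1})\right)^{-1}=\sum_{0}^{\infty}(Ax+B+Cx^{-1})^{n}z^{n}$ and then combining Lemma \ref{lemma3.8} with Corollary \ref{corollary3.3}, which is precisely your dictionary between the geometric series, the matrix inverse, and the coefficient-extraction defining $M_{0}$, $M_{1}$, $M_{-1}$. Your additional details (the $z$-adic convergence of the geometric series, invertibility of $M_{0}$ since $M_{0}\equiv I_{s}\bmod z$, and closure of algebraic elements under the matrix operations in Theorem \ref{theorem3.5}(2)) are just careful elaborations of steps the paper leaves implicit.
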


\begin{proof}
$\left(I_{s}-z(Ax+ B+Cx^{-1})\right)^{-1}=\sum_{0}^{\infty}(Ax+B+Cx^{-1})^{n}z^{n}$, and we combine Lemma \ref{lemma3.8} with Corollary \ref{corollary3.3}.\qed
\end{proof}

\section{Examples}
\label{section4}

\begin{example}
\label{example4.1}
For $i$, $j$ positive integers define $v_{i,j}$ by:
\vspace{-2ex}
\begin{enumerate}
\item[(1)] $v_{i,j}=1$ if $i-j\in\{-1,0,1\}$.
\item[(2)] $v_{i,j}=1$ if $j=i+3$ and $i$ is odd.
\item[(3)] All other $v_{i,j}$ are $0$.
\end{enumerate}
\end{example}

We calculate $G(V)$ where $V=|v_{i,j}|$. If we take $s=2$, (1) and (2) in the corollary to Theorem I are satisfied, and $D=B=\left(\begin{smallmatrix}
1&1\\
1&1
\end{smallmatrix}\right)$, 
$A=\left(\begin{smallmatrix}
0&1\\
0&0
\end{smallmatrix}\right)$, 
$C=\left(\begin{smallmatrix}
0&1\\
1&0
\end{smallmatrix}\right)$. Let $G=G(w)=G(w^{*})$. $G$ is a $2$ by $2$ matrix  
$\left(\begin{smallmatrix}
g_{1}&g_{2}\\
g_{3}&g_{4}
\end{smallmatrix}\right)$
with entries in $F[[z]]$, and $g_{1}=G(V)$. By Corollary \ref{corollary2.8}, $CGAGz^{2}+BGz-G+I_{2}=0$. Two of the four equations this gives are:
\begin{eqnarray*}
z^{2}g_{1}g_{3}+z(g_{1}+g_{3})-g_{3} &=& 0\\
z^{2}g_{3}^{2}+z(g_{1}+g_{3})-g_{1}+1 &=& 0\\
\end{eqnarray*}
Solving the first equation for $g_{3}$ and substituting in the second we find that $G(V)=g_{1}$ is a root of:

\makebox{$(z^{5}-z^{4})x^{3}+(3z^{4}-4z^{3}+2z^{2})x^{2}+(2z^{3}-4z^{2}+3z-1)x+(z^{2}-2z+1) = 0$.}

\begin{example}
\label{example4.2}
For $i$, $j$ positive integers define $v_{i,j}$ by:
\vspace{-2ex}
\begin{enumerate}
\item[(1)] $v_{i,j}=1$ if $i-j\in\{-1,0,1\}$.
\item[(2)] $v_{i,j}=1$ if $j=i+3$ and $i$ is even.
\item[(3)] All other $v_{i,j}$ are $0$.
\end{enumerate}

\end{example}

We calculate $G(V^{*})$ where $V^{*}=|v_{i,j}|$. Since $v_{2,5}=1$, condition (1) of the corollary to Theorem I is not met when $s=2$, and we instead take $s=4$.

Now $$D=B=
\left(\begin{smallmatrix}
1 &1 &0 &0\\
1 &1 &1 &0\\
0 &1 &1 &1\\
0 &0 &1 &1
\end{smallmatrix}\right)
\qquad
A=
\left(\begin{smallmatrix}
0 &0 &0 &1\\
0 &0 &0 &0\\
0 &0 &0 &0\\
0 &0 &0 &0
\end{smallmatrix}\right)
\qquad \mbox{and}\qquad
C=
\left(\begin{smallmatrix}
0 &0 &0 &0\\
1 &0 &0 &0\\
0 &0 &0 &0\\
1 &0 &1 &0
\end{smallmatrix}\right)
.$$

Let the entries in the first column of the $4$ by $4$ matrix $G=G(w)$ be $a$, $b$, $c$ and $d$. Examining the entries in the first column of the matrix equation $G=BGz+CGAGz^{2}+I_{4}$ we see:
\begin{eqnarray*}
                       a&=&(a+b)z+1\\
                       b&=&(a+b+c)z+bdz^{2}\\
                       c&=&(b+c+d)z\\
                       d&=&(c+d)z+d(a+c)z^{2}
\end{eqnarray*}

Using Maple to eliminate $b$, $c$, and $d$ from this system we find that $a=G(V^{*})$ is a root of:

\begin{eqnarray*}
(z^{2})\cdot(z-1)^{3}\cdot(3z^{2}+3z-2)\cdot & x^{3}&\\
 +(z-1)^{2}\cdot(9z^{4}+6z^{3}-11z^{2}+5z-1)\cdot & x^{2}&\\
 +(2z-1)\cdot(5z^{4}-13z^{2}+9z-2)\cdot & x^{\ }&\\
+(2z-1)^{2}\cdot(z^{2}+2z-1)&=&0.
\end{eqnarray*}

\begin{example}
\label{example4.3}
For $i$, $j$ positive integers define $v_{i,j}$ by:
\vspace{-2ex}
\begin{enumerate}
\item[(1)] $v_{i,j}=1$ if $i-j\in\{-1,1\}$.
\item[(2)] $v_{i,j}=1$ if $i-j\in\{-3,3\}$ and $i\equiv 2 \pod 3$.
\item[(3)] All other $v_{i,j}$ are $0$.
\end{enumerate}
\end{example}

We calculate $G(V)$ where $V=|v_{i,j}|$. Take $s=3$. Then:
$$A=\left(
\begin{smallmatrix}
0 & 0 & 1\\
0 & 1 & 0\\
0 & 0 & 0
\end{smallmatrix}\right)\qquad
B=D=\left(
\begin{smallmatrix}
0 & 1 & 0\\
1 & 0 & 1\\
0 & 1 & 0
\end{smallmatrix}\right)\qquad
C=\left(
\begin{smallmatrix}
0 & 0 & 0\\
0 & 1 & 0\\
1 & 0 & 0
\end{smallmatrix}\right).$$
The determinant of the matrix $xI_{3}-z(Ax^{2}+Bx+C)$ is $-x^{2}(zx^{2}+(3z^{2}-1)x+z)$. The splitting field of this polynomial over $F(z)$ is the extension of $F(z)$ generated by $\sqrt{1-10z^{2}+9z^{4}}$. The arguments of section \ref{section3} show that $M_{0}$, $M_{1}$ and $M_{-1}$ have entries in this extension field. It's not hard to write down these matrices explicitly using the partial-fraction decomposition argument. Theorem \ref{theorem3.5} and a Maple calculation then show that the $(1,1)$ entry in $G(w)$ is $4/(3+z^{2}+\sqrt{1-10z^{2}+9z^{4}})$. Since $D=B$, $G(w^{*})=G(w)$, and this $(1,1)$ entry is the desired $G(V)$.

\section{More algebraic generating functions}
\label{section5}

\begin{definition}
\label{def5.1}
Suppose that $\Lambda = M_{s}(F)$, $F$ a field, and that $A$, $B$, $C$, $D$ are in $\Lambda$.  Then $\mathcal{L}\subset$ the field of fractions of $F[[z]]$ is the extension field of $F(z)$ generated by the matrix entries of the $M_{0}$, $M_{1}$ and $M_{-1}$ of Definition \ref{def3.2}.
\end{definition}

\begin{remark}
\label{remark5.2}
As we've seen $\mathcal{L}$ contains the matrix entries of $G(w)$ and $G(w^{*})$ and is finite over $F(z)$. Indeed the proofs of Lemmas \ref{lemma3.7}, \ref{lemma3.8} and Corollary \ref{corollary3.9} show that $\mathcal{L}\subset$ a splitting field over $F(z)$ of the polynomial $\det |xI_{s}-z(Ax^{2}+Bx+C)|$.  One can say a bit more. The above polynomial splits into linear factors in $\Omega[x]$, and one may view its splitting field as a subfield of the valued field $\Omega$. By examining the partial-fraction decomposition one finds that $\mathcal{L}$ is fixed elementwise by each automorphism of the splitting field that is the identity on $F(z)$ and permutes the roots that have positive $\ord$ among themselves.

\end{remark}
The goal of this section is to show that some generating functions related to $G(w)$ also have their matrix entries in $\mathcal{L}$. These results will be used in a sequel to show the algebraicity (under a conjecture) of certain Hilbert-Kunz series and Hilbert-Kunz multiplicities.

Now let $u_{k}^{(n)}$ be as in Lemma \ref{lemma2.12} where $k$ is a positive integer. By definition, $G^{*}(w)=\sum u_{1}^{(n)}z^{n}$.

\begin{lemma}
\label{lemma5.3}
$\sum_{n}u_{k+1}^{(n)}z^{n}=G(w)(Az)\sum_{n}u_{k}^{(n)}z^{n}$.

\end{lemma}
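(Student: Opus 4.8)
The statement relates the generating function for standard walks from $k$ to $0$ (namely $\sum_n u_{k+1}^{(n)}z^n$) to the one for walks from $k-1$ to $0$ (namely $\sum_n u_k^{(n)}z^n$), inserting a factor of $G(w)(Az)$ in front. My plan is to prove this combinatorially, by analyzing how a standard walk from $k$ to $0$ decomposes according to the last time it leaves the level $k$. The factor $G(w)$ should account for an initial excursion that stays weakly above level $k$ and returns to $k$, the factor $Az$ should account for a single downward step from $k$ to $k-1$, and the remaining factor $\sum_n u_k^{(n)}z^n$ should account for the rest of the walk, which runs from $k-1$ down to $0$.

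\textbf{The decomposition step.}
First I would fix a standard walk $\alpha$ of length $n+1$ from $k$ to $0$. Since $\alpha$ ends at $0<k$ and starts at $k$, and steps lie in $\{-1,0,1\}$, there is a well-defined \emph{first} index $t$ at which $\alpha$ reaches level $k-1$; the step from $\alpha_{t-1}=k$ to $\alpha_t=k-1$ is necessarily a down-step (contributing the weight factor $A$). I would then split $\alpha$ uniquely as a concatenation $\alpha = \gamma \cdot (k,k-1) \cdot \delta$, where $\gamma$ is the portion before level $k-1$ is first reached and $\delta$ is the portion afterward. The initial segment $\gamma$ is a standard walk from $k$ to $k$ all of whose vertices are $\ge k$, so by translation it corresponds bijectively to a standard walk from $0$ to $0$; its weight therefore ranges over exactly the terms summed in $G(w)$. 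The final segment $\delta$ is a standard walk from $k-1$ to $0$ of some length $m$, so its weight contributes to $u_k^{(m)}$.

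\textbf{Assembling the generating function.}
Using the multiplicativity $w(\alpha\beta)=w(\alpha)w(\beta)$ from the remark after Definition~\ref{def2.3}, together with the fact that the length of $\alpha$ is $l(\gamma)+1+l(\delta)$, I would sum over all choices of $\gamma$ and $\delta$. The sum over $\gamma$ produces $G(w)$, the single down-step produces the factor $Az$, and the sum over $\delta$ produces $\sum_n u_k^{(n)}z^n$; the non-commutativity of $\Lambda$ is handled automatically because the weights multiply left-to-right along the walk in exactly the order $w(\gamma)\,A\,w(\delta)$. Collecting these gives $\sum_n u_{k+1}^{(n)}z^n = G(w)(Az)\sum_n u_k^{(n)}z^n$, as desired.

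\textbf{Anticipated obstacle.}
The main point requiring care is verifying that the first-passage decomposition is genuinely a bijection onto the full index set, i.e. that every pair $(\gamma,\delta)$ of the stated type arises from exactly one $\alpha$, and conversely. The subtlety is the requirement that $\gamma$ stay weakly above $k$ \emph{strictly} before the marked down-step --- this is what makes $t$ the \emph{first} passage to $k-1$ and prevents double-counting. I would make this explicit by noting that standardness of $\alpha$ forces all of $\gamma$ to lie at level $\ge k$, so the translated walk is automatically a standard walk from $0$ to $0$, and that $\delta$ inherits standardness from $\alpha$. Once the bijection is pinned down, the generating-function identity is a formal consequence of multiplicativity and is otherwise routine.
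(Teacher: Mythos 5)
Your decomposition is exactly the one in the paper: the paper's entire proof reads ``A standard walk from $k$ to $0$ can be written in just one way as the concatenation of a standard walk from $k$ to $k$, the walk $(k,k-1)$ and a standard walk from $k-1$ to $0$,'' which is precisely the first-passage splitting you construct. (Incidentally, your opening description ``the last time it leaves the level $k$'' is not the right characterization---the walk may return to level $k$ after first hitting $k-1$---but your actual construction, via the \emph{first} index $t$ with $\alpha_t=k-1$, is the correct one, so this is only a slip of phrasing.)

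There is, however, one genuine oversight in the weight bookkeeping. By Lemma~\ref{lemma2.12}, the quantities $u_k^{(n)}$ are sums of $w^{*}(\alpha)$, not of $w(\alpha)$, and your argument is carried out entirely with $w$: you invoke the multiplicativity $w(\alpha\beta)=w(\alpha)w(\beta)$ and write the factorization as $w(\gamma)\,A\,w(\delta)$. As written, this proves the analogous identity for $w$-weighted sums, which is a different statement: when $k=1$ the tail $\delta$ is a standard walk from $0$ to $0$ and can take level steps at height $0$, which $w^{*}$ weights by $D$ while $w$ weights by $B$, so ``the sum over $\delta$ produces $\sum_n u_k^{(n)}z^n$'' is false under $w$-weighting. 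The repair is short, but it is the real content of the lemma: along the concatenation $\alpha=\gamma\cdot(k,k-1)\cdot\delta$ no heights are shifted (the pieces sit at their true levels), so $w^{*}(\alpha)=w^{*}(\gamma)\,A\,w^{*}(\delta)$; and since $\gamma$ stays at height $\ge k\ge 1$ it never takes a level step at $0$, whence $w^{*}(\gamma)=w(\gamma)$, which in turn equals the $w$-weight of its translate to a standard $0$-to-$0$ walk. This observation is exactly why the factor appearing on the right-hand side is $G(w)$ rather than $G(w^{*})$. With that point added, your proof is complete and coincides with the paper's.
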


\begin{proof}
A standard walk from $k$ to $0$ can be written in just one way as the concatenation of a standard walk from $k$ to $k$, the walk $(k,k-1)$ and a standard walk from $k-1$ to $0$.\qed

\end{proof}

\begin{corollary}
\label{corollary5.4}
Fix $k\ge 1$. The generating function arising from the $(k,1)$ entries of the matrices $W^{n}$ has its matrix entries in $\mathcal{L}$.

\end{corollary}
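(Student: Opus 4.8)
The plan is to leverage Corollary~\ref{corollary2.13}, which identifies the first column of $W^{n}$ as the sequence $(u_{1}^{(n)}, u_{2}^{(n)}, \ldots)$. The generating function in question—call it $\Gamma_k := \sum_n u_{k}^{(n)} z^{n}$—is therefore the generating function of the $(k,1)$ entries of the $W^{n}$, now viewed as an $s$ by $s$ matrix with entries in $F[[z]]$ (recall each $u_{k}^{(n)}$ lives in $\Lambda = M_s(F)$). The goal is to show each such matrix entry lies in $\mathcal{L}$. The natural strategy is induction on $k$, using Lemma~\ref{lemma5.3} to relate the $k$ case to the $k-1$ case.

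First I would establish the base case $k=1$. By Corollary~\ref{corollary2.14} we have $G(W) = G(w^{*})$, and by Lemma~\ref{lemma2.10} the matrix entries of $G(w^{*})$ differ from those of $G(w)$ only through the relation $G(w^{*})^{-1} = G(w)^{-1} + (D-B)z$; since the entries of $G(w)$ lie in $\mathcal{L}$ (Remark~\ref{remark5.2}) and $\mathcal{L}$ is a field containing $F(z)$, the entries of $G(w^{*})$ lie in $\mathcal{L}$ as well. Thus $\Gamma_1 = G^{*}(w)$ has its matrix entries in $\mathcal{L}$. For the inductive step, Lemma~\ref{lemma5.3} gives $\Gamma_{k+1} = G(w)\,(Az)\,\Gamma_{k}$. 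Since $G(w)$ has matrix entries in $\mathcal{L}$, $A$ has entries in $F \subset \mathcal{L}$, and $z \in \mathcal{L}$, the product of these three matrices against $\Gamma_k$—whose entries lie in $\mathcal{L}$ by the inductive hypothesis—has entries in $\mathcal{L}$, because $\mathcal{L}$ is closed under the ring operations. This completes the induction.

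The only subtlety worth flagging is keeping straight the two layers of matrix structure. Each $u_{k}^{(n)}$ is itself an element of $M_s(F)$, so $\Gamma_k$ is an $s$ by $s$ matrix over $F[[z]]$, and Lemma~\ref{lemma5.3} is an identity of such matrices with products taken in $M_s(\mathcal{L}\text{-closure})$, respecting the noncommutativity of $\Lambda$. The claim ``entries in $\mathcal{L}$'' is stable under matrix multiplication precisely because $\mathcal{L}$ is a commutative field and matrix multiplication only involves finite sums of products of entries. I do not expect a genuine obstacle here: once the base case is pinned down via Lemmas~\ref{lemma2.10} and the identification $G(W)=G(w^{*})$, the induction is a routine closure argument. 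The one place to be careful is to confirm that $G^*(w)$ really is $\Gamma_1$ with entries in $\mathcal{L}$ rather than merely algebraic over $F(z)$—that is, that the field $\mathcal{L}$ (and not just some larger algebraic extension) suffices—which follows because the defining relation for $G(w^*)^{-1}$ stays inside $\mathcal{L}$.
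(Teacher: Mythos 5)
Your proof is correct and takes essentially the same route as the paper's: identify the generating function as $\sum_{n}u_{k}^{(n)}z^{n}$ via Corollary~\ref{corollary2.13}, settle $k=1$ by noting that this is $G(w^{*})$ whose entries lie in $\mathcal{L}$, and induct on $k$ using Lemma~\ref{lemma5.3}. (One trivial slip: Lemma~\ref{lemma2.10} gives $G(w^{*})^{-1}=G(w)^{-1}+(B-D)z$, not $(D-B)z$, but this has no effect on the argument.)
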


\begin{proof}
Corollary \ref{corollary2.13} shows that this generating function is $\sum_{n}u_{k}^{(n)}z^{n}$, and we use Lemma \ref{lemma5.3} and induction.\qed
\end{proof}

\begin{definition}
\label{def5.5}
$G_{r}^{*}=\sum\binom{\alpha_{0}}{r}w^{*}(\alpha)z^{l(\alpha)}$, the sum extending over all standard walks finishing at 0.

\end{definition}

Evidently $G_{0}^{*}=\sum_{k=0}^{\infty}\sum_{n=0}^{\infty}u_{k+1}^{(n)}z^{n}$. By Lemma \ref{lemma5.3}, this is $$\left(1+G(w)Az+(G(w)Az)^{2}+\cdots\right)G(w^{*}).$$ So:

\begin{lemma}
\label{lemma5.6}
$(1-G(w)Az)G_{0}^{*}=G(w^{*})$.
\end{lemma}

A variant of this is:

\begin{lemma}
\label{lemma5.7}
$(1-G(w)Az)G_{r+1}^{*}=G(w)(Az)G_{r}^{*}$.
\end{lemma}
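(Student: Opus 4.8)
The plan is to mirror the derivation of Lemma \ref{lemma5.6}, replacing the constant weights by the binomial weights $\binom{\alpha_{0}}{r}$ and invoking Pascal's identity at the crucial step. First I would group the standard walks finishing at $0$ according to their starting height. Writing $S_{k}=\sum_{n}u_{k+1}^{(n)}z^{n}$ for the generating function of the $w^{*}$-weighted standard walks from $k$ to $0$, the definition of $G_{r}^{*}$ becomes
\[
G_{r}^{*}=\sum_{k\ge 0}\binom{k}{r}S_{k},
\]
exactly as $G_{0}^{*}=\sum_{k\ge 0}S_{k}$ in the proof of Lemma \ref{lemma5.6}.

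Next I would use Lemma \ref{lemma5.3} iteratively. Setting $P=G(w)(Az)$ and noting $S_{0}=\sum_{n}u_{1}^{(n)}z^{n}=G(w^{*})$, Lemma \ref{lemma5.3} gives $S_{k}=PS_{k-1}$, hence $S_{k}=P^{k}G(w^{*})$ and therefore $G_{r}^{*}=\sum_{k\ge 0}\binom{k}{r}P^{k}G(w^{*})$. Since $G(w)=1+\cdots$, the element $P$ has positive order in $z$, so $P^{k}$ has $z$-order at least $k$ and each of these sums converges in $\Lambda[[z]]$; moreover the integer coefficients $\binom{k}{r}$ are central, so the non-commutativity of $\Lambda$ causes no trouble as long as I keep $G(w^{*})$ on the right throughout.

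It then remains to compare the two sides formally. Expanding and shifting the index in the subtracted sum (the $k=0$ term being killed by $\binom{-1}{r+1}=0$),
\[
(1-P)G_{r+1}^{*}=\sum_{k\ge 0}\binom{k}{r+1}\bigl(P^{k}-P^{k+1}\bigr)G(w^{*})=\sum_{k\ge 0}\Bigl(\binom{k}{r+1}-\binom{k-1}{r+1}\Bigr)P^{k}G(w^{*}),
\]
while on the other hand
\[
P\,G_{r}^{*}=\sum_{k\ge 0}\binom{k}{r}P^{k+1}G(w^{*})=\sum_{k\ge 0}\binom{k-1}{r}P^{k}G(w^{*}).
\]
Pascal's identity $\binom{k}{r+1}-\binom{k-1}{r+1}=\binom{k-1}{r}$ shows the two right-hand sides coincide, which is precisely the asserted $(1-G(w)Az)G_{r+1}^{*}=G(w)(Az)G_{r}^{*}$.

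I expect no serious obstacle: the computation is entirely formal once the grouping by starting height and the iteration of Lemma \ref{lemma5.3} are in place. The only points requiring care are the bookkeeping of the index shift (handling the $k=0$ boundary via the vanishing of $\binom{-1}{r+1}$) and the observation that the binomial coefficients, being scalars, commute past the matrices, so the argument survives unchanged over the non-commutative ring $\Lambda=M_{s}(F)$.
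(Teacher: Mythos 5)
Your proof is correct, but it follows a genuinely different route from the paper's. The paper proves Lemma \ref{lemma5.7} by a deformation trick: replace $\Lambda$, $A$, $C$ by $\Lambda[[t]]$, $A(1+t)$, $C(1+t)^{-1}$, observe that the weight of a walk starting at height $k$ and finishing at $0$ is thereby multiplied by $(1+t)^{k}$ (so that $G(w)$ and $G(w^{*})$ are unchanged), apply Lemma \ref{lemma5.6} in the deformed situation to obtain
\[
\bigl((1-G(w)Az)-G(w)Azt\bigr)\Bigl(\textstyle\sum_{r}t^{r}G_{r}^{*}\Bigr)=G(w^{*}),
\]
and read off the coefficient of $t^{r+1}$. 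You bypass the deformation entirely: iterating Lemma \ref{lemma5.3} gives the closed form $S_{k}=P^{k}G(w^{*})$ with $P=G(w)Az$, hence $G_{r}^{*}=\sum_{k}\binom{k}{r}P^{k}G(w^{*})$, and the lemma reduces to an index shift plus Pascal's rule. The two arguments encode the same combinatorics---expanding $(1+t)^{k}$ is exactly what produces the $\binom{k}{r}$, and extracting the $t^{r+1}$-coefficient is the generating-function form of your Pascal computation---but yours is more elementary and self-contained (no new weight functions, no need to check $G(w|t)=G(w)$), while the paper's is shorter on the page and delivers all values of $r$ simultaneously from one identity in $\Lambda[[t]][[z]]$, reusing Lemma \ref{lemma5.6} verbatim. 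One small point of care in your version: when you write $PG_{r}^{*}=\sum_{k\ge 0}\binom{k-1}{r}P^{k}G(w^{*})$ you need $\binom{-1}{r}=0$ for every $r\ge 0$, including $r=0$; so adopt the combinatorial convention $\binom{m}{j}=0$ for $m<0$ uniformly (under which Pascal's rule does hold at $k=0$), or simply start both shifted sums at $k=1$, where only nonnegative upper indices occur. With that convention fixed, your handling of non-commutativity (binomial coefficients central, $G(w^{*})$ kept on the right) and of convergence ($P$ has positive $z$-order, so all sums converge coefficientwise in $\Lambda[[z]]$) is exactly what is needed.
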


\begin{proof}
We introduce new weight functions $w|t$ and $w^{*}|t$ as follows. Replace $\Lambda$, $A$ and $C$ by $\Lambda[[t]]$, $A(1+t)$ and $C(1+t)^{-1}$, and let $w|t$ and $w^{*}|t$ be the new $w$ and $w^{*}$ that arise. If $\alpha = (\alpha_{0}, \ldots , \alpha_{l})$ is a walk from $k$ to $0$ then there are $k=\alpha_{0}$ more steps of size $-1$ in the walk than there are steps of size $1$. It follows that $w|t(\alpha)$ and $w^{*}|t(\alpha)$ are $(1+t)^{\alpha_{0}}w(\alpha)$ and $(1+t)^{\alpha_{0}}w^{*}(\alpha)$. In particular, $G(w|t)=G(w)$ and $G(w^{*}|t)=G(w^{*})$. Applying Lemma \ref{lemma5.6} in this new situation we find:
$$\left((1-G(w)Az)-G(w)Azt\right)\left(\sum_{k=0}^{\infty}\sum_{n=0}^{\infty}(1+t)^{k}u_{k+1}^{(n)}z^{n}\right)=G(w^{*}).$$
In particular, the co-efficient of $t^{r+1}$ in the left-hand side of the above equation is $0$. Evaluating this co-efficient we get the lemma. \qed

\end{proof}

\begin{theorem}
\label{theorem5.8}
Let $a_{1},a_{2},\ldots$ be elements of $F$. Suppose there is a polynomial function whose value at $j$ is $a_{j}$ for sufficiently large $j$. Let $R_{n}=\sum_{1}^{\infty}a_{k}u_{k}^{(n)}$. Then all the matrix entries of $\sum R_{n}z^{n}$ lie in $\mathcal{L}$.
\end{theorem}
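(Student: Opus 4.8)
The plan is to reduce the claim about the sequence $a_1, a_2, \ldots$ to the two families of generating functions we already control, namely the $\sum_n u_{k}^{(n)} z^n$ from Corollary \ref{corollary5.4} and the $G_r^*$ from Definition \ref{def5.5}. The key observation is that the hypothesis gives us a single polynomial $P$ with $a_j = P(j)$ for all $j \ge N$, say, while only finitely many $a_j$ with $j < N$ deviate from this formula. Accordingly I would split $R_n$ as
$$R_n = \sum_{k=1}^{N-1}\bigl(a_k - P(k)\bigr)u_k^{(n)} + \sum_{k=1}^{\infty}P(k)\,u_k^{(n)}.$$
The first sum is a finite $F$-linear combination of the $u_k^{(n)}$, so $\sum_n$ of it lies in $\mathcal{L}$ by Corollary \ref{corollary5.4} and the fact that $\mathcal{L}$ is a field. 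Everything therefore hinges on the second, genuinely infinite, sum.

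For the infinite sum I would expand $P$ in the basis of binomial coefficients $\binom{k}{r}$, $r = 0, 1, \ldots, \deg P$, writing $P(k) = \sum_r c_r \binom{k}{r}$ with $c_r \in F$; this is always possible and recovers $P$ on all integers. Then
$$\sum_{k=1}^{\infty} P(k)\, u_k^{(n)} = \sum_{r} c_r \sum_{k=1}^{\infty}\binom{k}{r} u_k^{(n)}.$$
Summing against $z^n$ and interchanging, the inner object $\sum_n \bigl(\sum_{k}\binom{k}{r}u_k^{(n)}\bigr) z^n$ is, after reindexing $k \mapsto \alpha_0 + 1$ and summing over standard walks, exactly the generating function $G_r^*$ of Definition \ref{def5.5} (up to the shift between $k$ and $\alpha_0$, which itself only mixes the $G_r^*$ of various $r$ by Pascal's rule, so stays within the same finite-dimensional $F$-span). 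Thus $\sum_n R_n z^n$ is an $F$-linear combination of the finitely many matrices $G_0^*, \ldots, G_{\deg P}^*$ together with the finitely many $\sum_n u_k^{(n)} z^n$ for $k < N$.

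It then remains to show each $G_r^*$ has entries in $\mathcal{L}$, and this follows by induction on $r$ from the recursive relations already established: Lemma \ref{lemma5.6} gives $(1 - G(w)Az)G_0^* = G(w^*)$, and Lemma \ref{lemma5.7} gives $(1 - G(w)Az)G_{r+1}^* = G(w)(Az)G_r^*$. Since $G(w)$ and $G(w^*)$ have entries in $\mathcal{L}$ (Remark \ref{remark5.2}), and $1 - G(w)Az$ is congruent to $I_s \bmod z$ and hence invertible over the field of fractions of $F[[z]]$ with inverse again having entries in $\mathcal{L}$, each $G_r^*$ inherits entries in $\mathcal{L}$. Assembling the finitely many pieces completes the argument.

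I expect the main obstacle to be bookkeeping rather than a deep difficulty: one must verify carefully that the index shift between the walk start $\alpha_0$ appearing in $G_r^*$ and the index $k = \alpha_0 + 1$ appearing in $u_k^{(n)}$ does not escape the finite span of the $G_r^*$, and that the interchange of the (infinite) sum over $k$ with the sum over $n$ is legitimate in $\Lambda[[z]]$. The latter is harmless because for each fixed power $z^n$ only finitely many walks contribute, so only finitely many $k$ give a nonzero $u_k^{(n)}$; hence every coefficient extraction is a finite sum and no convergence question arises.
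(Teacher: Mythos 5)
Your proposal is correct and takes essentially the same route as the paper's proof: reduce to the polynomial case using Corollary \ref{corollary5.4}, expand in a binomial-coefficient basis so the remaining sum becomes (a combination of) the $G_{r}^{*}$, and conclude by Lemmas \ref{lemma5.6}, \ref{lemma5.7} and induction on $r$. The only cosmetic difference is that the paper expands directly in the basis $j\mapsto\binom{j-1}{r}$, which makes the identification with $G_{r}^{*}$ immediate and avoids your Pascal-rule shift between $\binom{k}{r}$ and $\binom{k-1}{r}$.
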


\begin{proof}
Corollary \ref{corollary5.4} shows that the generating function arising from any single $(j,1)$ entry has matrix entries in $\mathcal{L}$. So we may assume that $j\rightarrow a_{j}$ is a polynomial function. Since any polynomial function is an $F$-linear combination of the functions $j\rightarrow\binom{j-1}{r}$, $r=0,1,2,\ldots$ we may assume $a_{j}=\binom{j-1}{r}$.  But then $\sum R_{n}z^{n}$ is $G_{r}^{*}$, and we use Lemmas \ref{lemma5.6}, \ref{lemma5.7} and induction.\qed
\end{proof}

\begin{corollary}
\label{corollary5.9}
Suppose $V=|v_{i,j}|$, $i,j\ge 1$ is a matrix with entries in $F$ satisfying:

\begin{enumerate}
\item[(1)] $v_{i,j}=0$ whenever $i\le s$ and $j>2s$ or $j\le s$ and $i>2s$.
\item[(2)] $v_{i+s,j+s}=v_{i,j}$ whenever $i+j\ge s+2$.
\item[(3)] The initial $2s$ by $2s$ block in $V$ is 
$\left(\begin{smallmatrix}
D&C\\
A&B
\end{smallmatrix}
\right)$.
\end{enumerate}

Suppose further that $a_{1},a_{2},\ldots$ are in $F$ and that for each $i$, $1\le i \le s$, there is a polynomial function agreeing with $k\rightarrow a_{i+sk}$ for large $k$. Let $v_{i}^{(n)}$ be the $(i,1)$ entry in $V^{n}$. Then $\sum_{i,n} v_{i}^{(n)}a_{i}z^{n}$ is in $\mathcal{L}$.
\end{corollary}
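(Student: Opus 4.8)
The plan is to recognize this as a direct application of Theorem~\ref{theorem5.8}, once $V$ is translated into the block matrix $W$ over $\Lambda=M_s(F)$. Conditions (1), (2), (3) are exactly those used in the corollary to Theorem~I: reading $V$ off in $s\times s$ blocks identifies it with the matrix $W=|w_{i,j}|$ given by $w_{1,1}=D$, $w_{i,i}=B$ for $i>1$, $w_{i+1,i}=A$, $w_{i,i+1}=C$, and all other blocks zero. Since block multiplication of infinite banded matrices is a ring homomorphism onto matrices over $\Lambda$, the $(K,L)$ block of $V^n$ is the $(K,L)$ entry of $W^n$ in $\Lambda$. Writing $i=(K-1)s+r$ with $1\le r\le s$, and noting that the scalar column index $1$ sits in the first position of the first block column, the $(i,1)$ entry $v_i^{(n)}$ of $V^n$ is the $(r,1)$ entry of the $\Lambda$-element $(W^n)_{K,1}$. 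By Corollary~\ref{corollary2.13} this $\Lambda$-element is precisely $u_K^{(n)}$, so $v_i^{(n)}=(u_K^{(n)})_{r,1}$.

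First I would reorganize the target series by collecting terms according to the residue $r$ of $i$ modulo the block size $s$. For each fixed $n$ only finitely many $v_i^{(n)}$ are nonzero (indeed $u_K^{(n)}=0$ once $K>n+1$, since a standard walk of length $n$ from $K-1$ to $0$ forces $K-1\le n$), so the inner sum is finite and the manipulation is legitimate in $F[[z]]$. Setting $i=(K-1)s+r$ gives
$$\sum_{i} a_i v_i^{(n)} = \sum_{r=1}^{s}\sum_{K\ge 1} a_{(K-1)s+r}\,(u_K^{(n)})_{r,1} = \sum_{r=1}^{s}\Bigl(\sum_{K\ge 1} a_{(K-1)s+r}\,u_K^{(n)}\Bigr)_{r,1},$$
where the last step uses that extracting the $(r,1)$ entry is $F$-linear.

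Next I would fix $r\in\{1,\dots,s\}$ and consider the scalar sequence $c_K^{(r)} := a_{(K-1)s+r}$ for $K\ge 1$. The hypothesis that $k\mapsto a_{r+sk}$ agrees with a polynomial function for large $k$ says exactly that $K\mapsto c_K^{(r)}$ is eventually polynomial. Hence Theorem~\ref{theorem5.8}, applied with $c_K^{(r)}$ playing the role of the coefficients there, shows that the matrix generating function $\sum_n R_n^{(r)} z^n$, with $R_n^{(r)}=\sum_{K\ge 1} c_K^{(r)} u_K^{(n)}$, has all of its matrix entries in $\mathcal{L}$; in particular its $(r,1)$ entry lies in $\mathcal{L}$. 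Combining this with the reorganization above yields
$$\sum_{i,n} v_i^{(n)} a_i z^n = \sum_{r=1}^{s}\Bigl(\sum_n R_n^{(r)} z^n\Bigr)_{r,1},$$
a finite sum of elements of the field $\mathcal{L}$, hence itself in $\mathcal{L}$.

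I expect the only real work to be the bookkeeping in this block-to-scalar dictionary: matching the row and column positions correctly so that $v_i^{(n)}=(u_K^{(n)})_{r,1}$, and checking that each per-residue sequence $c_K^{(r)}$ inherits the eventual-polynomiality hypothesis. Once that dictionary is in place, the statement reduces cleanly to Theorem~\ref{theorem5.8} together with the fact that $\mathcal{L}$ is a field closed under finite sums, and no new estimates or constructions are required.
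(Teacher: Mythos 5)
Your proposal is correct and is essentially the paper's own argument: both construct $W$ as in the corollary to Theorem I, use Corollary \ref{corollary2.13} to identify $v_{i+sk}^{(n)}$ with the $(i,1)$ entry of the $s$ by $s$ matrix $u_{k+1}^{(n)}$, apply Theorem \ref{theorem5.8} once for each residue class of $i$ modulo $s$ (whose coefficient sequence is eventually polynomial by hypothesis), and then sum over the $s$ classes. The additional bookkeeping you supply (finiteness of the inner sums, the block-multiplication dictionary) is sound but goes only slightly beyond what the paper leaves implicit.
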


\begin{proof}
Construct $W$ as in the proof of the corollary to Theorem I.  As the first column of $W^{n}$ is $u_{1}^{(n)}, u_{2}^{(n)}, \ldots$ it follows that $v_{i+sk}^{(n)}$ is just the $(i,1)$ entry in the $s$ by $s$ matrix $u_{k+1}^{(n)}$. Theorem \ref{theorem5.8} shows that for each $i$ with $1\le i \le s$, $\sum_{k,n}v_{i+sk}^{(n)}a_{i+sk}z^{n}$ is in $\mathcal{L}$. Summing over $i$ we get the result.\qed
\end{proof}

The following results may seem artificial but they're convenient for our intended applications to Hilbert-Kunz theory.

\begin{lemma}
\label{lemma5.10}
Let $Y$ be a finite dimensional vector space over $F$, $T:Y\rightarrow Y$ and $l:Y\rightarrow F$ linear maps and $y_{1},y_{2},\ldots$ a sequence in $Y$. Let $V$ and $s$ be as in Corollary \ref{corollary5.9}. Suppose that for each $i$, $1\le i \le s$, each co-ordinate of $y_{i+sk}$ with respect to a fixed basis of $Y$ is an eventually polynomial function of $k$. Define $y^{(n)}$ inductively by $y^{(0)}=0$, $y^{(n+1)}=Ty^{(n)}+\sum v_{i}^{(n)}y_{i}$---see Corollary \ref{corollary5.9} for the definition of $v_{i}^{(n)}$. Then $\sum l\left(y^{(n)}\right)z^{n}$ is in $\mathcal{L}$.
\end{lemma}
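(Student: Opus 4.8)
The plan is to solve the inductive recurrence in closed form, pass to generating functions, and recognize the result as a finite $\mathcal{L}$-combination of objects already handled by Corollary \ref{corollary5.9}.

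First I would unwind the recurrence. Since $y^{(0)}=0$ and $y^{(n+1)}=Ty^{(n)}+\sum_i v_i^{(n)}y_i$, an easy induction gives
\[ y^{(n)}=\sum_{m=0}^{n-1}T^{\,n-1-m}\Big(\sum_i v_i^{(m)}y_i\Big), \]
where for each fixed $m$ the inner sum is finite. Applying the linear functional $l$, passing to generating functions, and substituting $p=n-1-m$ lets me factor the resulting double sum:
\[ \sum_n l\big(y^{(n)}\big)z^n = z\sum_i\Big(\sum_m v_i^{(m)}z^m\Big)\Big(\sum_p l\big(T^{p}y_i\big)z^p\Big) = z\sum_i g_i(z)\,f_i(z), \]
where $g_i(z)=\sum_m v_i^{(m)}z^m$ is the generating function of the $(i,1)$ entries of $V^n$ and $f_i(z)=\sum_p l(T^p y_i)z^p$. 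This rearrangement is legitimate in $F[[z]]$ because $g_{i_0+sk}(z)$ has order at least $k$ (the matrix $u^{(n)}_{k+1}$ vanishes for $n<k$, a standard walk from $k$ to $0$ having length at least $k$), so only finitely many $i$ contribute to each power of $z$.

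Next I would exploit finite-dimensionality. Formally $f_i(z)=l\big((I-zT)^{-1}y_i\big)$, and since $(I-zT)^{-1}$ is a matrix over $F(z)$, each $f_i(z)$ lies in $F(z)$. Fixing the given basis $e_1,\ldots,e_d$ of $Y$ and writing $y_{i_0+sk}=\sum_t c_t(i_0,k)\,e_t$, I would expand
\[ f_{i_0+sk}(z)=\sum_t c_t(i_0,k)\,h_t(z),\qquad h_t(z):=l\big((I-zT)^{-1}e_t\big)\in F(z), \]
where by hypothesis each $c_t(i_0,\cdot)$ is eventually polynomial. Grouping the sum over $i=i_0+sk$ by residue $i_0$ modulo $s$ and by basis index $t$ then yields
\[ \sum_n l\big(y^{(n)}\big)z^n = z\sum_{i_0=1}^{s}\sum_t h_t(z)\Big(\sum_{k\ge 0}c_t(i_0,k)\,g_{i_0+sk}(z)\Big). \]

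Finally, each inner sum is exactly an instance of Corollary \ref{corollary5.9}: take the weight sequence $a_{i_0+sk}=c_t(i_0,k)$ and $a_j=0$ for $j$ outside the residue class of $i_0$; along every residue class this is eventually polynomial, so the corollary places $\sum_k c_t(i_0,k)g_{i_0+sk}(z)$ in $\mathcal{L}$. Since $h_t(z)\in F(z)\subseteq\mathcal{L}$ and $\mathcal{L}$ is a field, the finite $\mathcal{L}$-linear combination above again lies in $\mathcal{L}$, which is the assertion. The step I expect to be the main obstacle — and would check most carefully — is the bookkeeping that turns one generating function into the product $g_i(z)f_i(z)$ and then justifies the interchange of the infinite sums over $k$ and $t$; once the order estimate on $g_i$ guarantees that each coefficient of $z$ is a genuinely finite sum, the reduction to Corollary \ref{corollary5.9} is purely mechanical.
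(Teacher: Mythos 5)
Your proof is correct, and it rests on the same two pillars as the paper's: Corollary \ref{corollary5.9} and the invertibility of $I-zT$ over $F(z)\subset\mathcal{L}$. But you organize the argument in the opposite order, and the difference is instructive. The paper never unwinds the recursion: it observes that the recurrence together with $y^{(0)}=0$ gives the coefficientwise identity $(I-zT)\sum y^{(n)}z^{n}=\sum_{i,n}v_{i}^{(n)}y_{i}z^{n+1}$, whose right-hand side, read coordinate by coordinate in the fixed basis of $Y$, is exactly the kind of series Corollary \ref{corollary5.9} covers (the coordinate sequences of the $y_i$ satisfy its hypothesis verbatim); since $\det(I-zT)$ is a nonzero element of $F(z)$, multiplying by $(I-zT)^{-1}$ keeps all coordinates in $\mathcal{L}$, and applying $l$ finishes. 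You instead apply $(I-zT)^{-1}$ first --- solving the recursion to get $\sum_n l\big(y^{(n)}\big)z^n = z\sum_i g_i(z)f_i(z)$ --- and only then invoke Corollary \ref{corollary5.9}, once for each pair consisting of a residue class $i_0$ and a basis vector $e_t$. This forces you to justify rearranging infinite sums, which you do correctly via the order estimate $\ord\, g_{i_0+sk}\ge k$ (a standard walk from $k$ to $0$ has length at least $k$); the paper's order of operations makes all such bookkeeping evaporate, since the recurrence itself is an identity with finite sums in each coefficient. What your route buys is an explicit closed form for the answer as a finite $\mathcal{L}$-combination $z\sum_{i_0,t}h_t\big(\sum_k c_t(i_0,k)g_{i_0+sk}\big)$, which could be useful for actual computation (compare Example \ref{example5.12}); what the paper's route buys is brevity and the complete absence of convergence issues.
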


\begin{proof}
$(I-zT)\sum y^{(n)}z^{n}=\sum_{i,n}v_{i}^{(n)}y_{i}z^{n+1}$. By Corollary \ref{corollary5.9}, all the co-ordinates of $(I-zT)\sum y^{(n)}z^{n}$ with respect to a fixed basis of $Y$ lie in $\mathcal{L}$. Since $\det |I-zT|$ is a non-zero element of $F(z)\subset \mathcal{L}$, the same is true of the co-ordinates of $\sum y^{(n)}z^{n}$, giving the lemma.\qed
\end{proof}

\begin{theorem}
\label{theorem5.11}
Suppose $X$ is a vector space over $F$, $Y$ is a finite dimensional subspace, $T:X\rightarrow X$ is linear with $T(Y)\subset Y$, and $E_{1},E_{2},\ldots$ lie in $X$. Suppose further that $T(E_{j})=\sum v_{i,j}E_{i}+y_{j}$, where $V=|v_{i,j}|$ is as in Lemma \ref{lemma5.10} and $y_{1},y_{2},\ldots$ is a sequence in $Y$ satisfying the condition of Lemma \ref{lemma5.10}. Then if $l:X\rightarrow F$ is linear with each $l(E_{i})=0$, the power series $\sum_{0}^{\infty}l\left(T^{n}(E_{1})\right)z^{n}$ is in $\mathcal{L}$.

\end{theorem}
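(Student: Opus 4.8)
The plan is to separate $T^{n}(E_{1})$ into its ``$E$-part'' and its ``$Y$-part'', and to observe that the $Y$-part is precisely the sequence produced by Lemma~\ref{lemma5.10}. Concretely, I claim that for all $n\ge 0$,
\[
T^{n}(E_{1})=\sum_{i} v_{i}^{(n)}E_{i}+y^{(n)},
\]
where $v_{i}^{(n)}$ is the $(i,1)$ entry of $V^{n}$ as in Corollary~\ref{corollary5.9}, and $y^{(n)}\in Y$ is the sequence defined inductively in Lemma~\ref{lemma5.10} by $y^{(0)}=0$ and $y^{(n+1)}=Ty^{(n)}+\sum_{i} v_{i}^{(n)}y_{i}$. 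The sum over $i$ is finite for each $n$ because the block structure of $V$ imposed by Corollary~\ref{corollary5.9} forces the first column of $V^{n}$ to have finite support, so each $T^{n}(E_{1})$ is a genuine finite combination of the $E_{i}$ together with a vector in $Y$.

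First I would prove the claim by induction on $n$. For $n=0$ it is immediate: $T^{0}(E_{1})=E_{1}$, $v_{i}^{(0)}=\delta_{i,1}$, and $y^{(0)}=0$. For the inductive step I would apply $T$ to the displayed expression and use the hypothesis $T(E_{i})=\sum_{j} v_{j,i}E_{j}+y_{i}$ together with $T(Y)\subset Y$. The essential point is that the $E$-coefficients and the $Y$-part evolve independently: the part expressed through the $E_{j}$ becomes $\sum_{j}\bigl(\sum_{i} v_{j,i}v_{i}^{(n)}\bigr)E_{j}=\sum_{j} v_{j}^{(n+1)}E_{j}$, since $\sum_{i} v_{j,i}v_{i}^{(n)}=(V^{n+1})_{j,1}$, while the contribution landing in $Y$ is $T(y^{(n)})+\sum_{i} v_{i}^{(n)}y_{i}=y^{(n+1)}$. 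This last identity is exactly the recursion of Lemma~\ref{lemma5.10}, so the $Y$-part of $T^{n}(E_{1})$ coincides with $y^{(n)}$ for every $n$.

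Then I would apply the functional $l$. By linearity,
\[
l\bigl(T^{n}(E_{1})\bigr)=\sum_{i} v_{i}^{(n)}\,l(E_{i})+l\bigl(y^{(n)}\bigr),
\]
and the hypothesis $l(E_{i})=0$ for all $i$ annihilates the first sum, leaving $l(T^{n}(E_{1}))=l(y^{(n)})$. Hence $\sum_{n} l(T^{n}(E_{1}))z^{n}=\sum_{n} l(y^{(n)})z^{n}$. Since $y_{1},y_{2},\ldots$ satisfies the eventual-polynomiality condition and $V$, $s$ are as required, Lemma~\ref{lemma5.10}, applied with $T$ and $l$ restricted to the invariant finite-dimensional space $Y$, shows that this series lies in $\mathcal{L}$, which completes the argument.

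I do not expect a serious obstacle here: the real content of the theorem is the reduction from the (possibly infinite-dimensional) ambient space $X$ to the finite-dimensional $Y$, and the hypotheses are engineered so that Lemma~\ref{lemma5.10} applies essentially verbatim. The only points that genuinely need care are verifying the finite-support property of the first column of $V^{n}$, so that the decomposition is well posed, and confirming that $T|_{Y}$ and $l|_{Y}$ meet the precise hypotheses of Lemma~\ref{lemma5.10}.
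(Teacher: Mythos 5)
Your proposal is correct and follows essentially the same route as the paper: the paper likewise defines $y^{(n)}$ by the recursion of Lemma \ref{lemma5.10}, proves $T^{n}(E_{1})=\sum v_{i}^{(n)}E_{i}+y^{(n)}$ by induction using the identity $\sum_{j}v_{i,j}v_{j}^{(n)}=v_{i}^{(n+1)}$, and then concludes $l(T^{n}(E_{1}))=l(y^{(n)})$ and applies Lemma \ref{lemma5.10}. Your added remarks about finite support of the first column of $V^{n}$ and restricting $T$, $l$ to $Y$ are sound points of care that the paper leaves implicit.
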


\begin{proof}
Define $y^{(n)}$ as in Lemma \ref{lemma5.10}. Using the identity $\sum_{j}v_{i,j}v_{j}^{(n)}=v_{i}^{(n+1)}$ and induction we find that $T^{n}(E_{1})=\sum v_{i}^{(n)}E_{i}+y^{(n)}$. So $l\left(T^{n}(E_{1})\right)=l(y^{(n)})$ and we apply Lemma \ref{lemma5.10}.\qed
\end{proof}

The following example is closely related to our calculations in \cite{2}. We'll explain how this and similar examples relate to Hilbert-Kunz theory in a sequel to this paper.

\begin{example}
\label{example5.12}
Suppose $\delta_{1}$ and $\delta_{2}$ are a basis of $Y$, that $y_{1}=6\delta_{1}$ and that $y_{k}=(8k-2)\delta_{1}+\delta_{2}$, $k>1$. Suppose further that $T(\delta_{1})=16\delta_{1}$, $T(\delta_{2})=4\delta_{1}+4\delta_{2}$, $T(E_{1})=E_{1}+E_{2}+y_{1}$, and that $T(E_{k})=E_{k-1}+E_{k+1}+y_{k}$ for $k>1$. Suppose $l:X\rightarrow F$ takes $\delta_{1}$ to $1$, and $\delta_{2}$ and each $E_{k}$ to $0$. We shall calculate the power series $S=\sum l\left(T^{n}(E_{1})\right)z^{n}$ explicitly.
\end{example}

In the above situation, $v_{1,1}=v_{i,i+1}=v_{i+1,i}=1$ and all other $v_{i,j}$ are $0$. So we can take $s=1$, $A=C=D=1$ and $B=0$. Since $s=1$, $v_{k}^{(n)}=u_{k}^{(n)}$. It follows from this and the definition of the $y_{k}$ that $\sum_{k,n}v_{k}^{(n)}y_{k}z^{n+1}=z(8G_{1}^{*}+6G_{0}^{*})\delta_{1}+z(G_{0}^{*}-G(w^{*}))\delta_{2}$.

Now the matrix of $T:Y\rightarrow Y$ on the basis $(\delta_{1},\delta_{2})$ is 
$\left(
\begin{smallmatrix}
16&4\\
0&4
\end{smallmatrix}
\right)$. It follows that the matrix of $I- zT$ is
$\left(
\begin{smallmatrix}
1-16z&-4z\\
0&1-4z
\end{smallmatrix}
\right)$ with inverse
$\frac{1}{(1-16z)(1-4z)}\left(
\begin{smallmatrix}
1-4z&4z\\
0&1-16z
\end{smallmatrix}
\right)$.
Since $S$ is the co-efficient of $\delta_{1}$ in $\sum l(y^{(n)})z^{n}=(I-zT)^{-1}\cdot \sum_{k,n}v_{k}^{(n)}y_{k}z^{n+1}$, the last paragraph shows that $(1-16z)(1-4z)S=(z-4z^{2})(8G_{1}^{*}+6G_{0}^{*})+4z^{2}(G_{0}^{*}-G(w^{*}))$. It only remains to calculate $G(w^{*})$, $G_{0}^{*}$ and $G_{1}^{*}$.

Lemma \ref{lemma2.7} and Corollary \ref{corollary2.8} show that $H(w)=z^{2}G(w)$, and $z^{2}G(w)^{2}-G(w)+1=0$. So $G(w)$ and $H(w)$ are $\frac{1-\sqrt{1-4z^{2}}}{2z^{2}}$ and $\frac{1-\sqrt{1-4z^{2}}}{2}$. Lemma \ref{lemma2.10} then shows $G(w^{*})=\frac{1}{2z(1-2z)}(-1+2z+\sqrt{1-4z^{2}})$. Making use of Lemmas \ref{lemma5.6} and \ref{lemma5.7} we find that $G_{0}^{*}$ and $G_{1}^{*}$ are $\frac{1}{1-2z}$ and $\frac{1}{2(1-2z)^{2}}(-1+2z+\sqrt{1-4z^{2}})$. A brief calculation then gives the explicit formula:
$$(1-16z)(1-4z)(1-2z)^{2}S=4z(1-2z)^{2}+(2z-12z^{2})\sqrt{1-4z^{2}}.$$

%\clearpage

%%%%%%%%
%%%%%%%%%%%%%%%%

\label{}

% The Appendices part is started with the command \appendix;
% appendix sections are then done as normal sections
% \appendix

% \section{}
% \label{}

\end{document}